\numberwithin{equation}{section}
\begin{document}

\title[Focal Rigidity of Hyperbolic Surfaces]{Focal Rigidity of Hyperbolic Surfaces}

\author[F. H. Kwakkel]{Ferry H. Kwakkel}

\address{Instituto de Matematica Pura e Aplicada \\ Brazil}

\email{kwakkel@impa.br}

\date{}


\newtheorem{thm}{Theorem}
\newtheorem{cor}[thm]{Corollary}
\newtheorem{lem}[thm]{Lemma}
\newtheorem{prop}[thm]{Proposition}

\newtheorem*{thma}{Theorem A}
\newtheorem*{corb}{Corollary B}
\newtheorem*{strat}{Existence Theorem}
\newtheorem*{focal}{Focal Rigidity Conjecture}
\newtheorem*{mostow}{Mostow's Rigidity Theorem}
\newtheorem*{open}{Open problem}
\newtheorem*{conj}{Conjecture}

\newtheorem*{focalconj}{Focal Rigidity Conjecture}

\theoremstyle{remark}
\newtheorem*{notation}{Notation}

\theoremstyle{definition}
\newtheorem{defn}{Definition}
\newtheorem{rem}{Remark}
\newtheorem*{problem}{Problem}
\newtheorem{quest}{Question}

\numberwithin{equation}{section}
\newcommand{\figref}[1]{Figure~\ref{#1}}
\newcommand{\pichere}[2]
{\begin{center}\includegraphics[width=#1\textwidth]{#2}\end{center}}

\newcommand{\bignote}[1]{\begin{quote} \sf #1 \end{quote}}

\newcommand{\QED}{\rlap{$\sqcup$}$\sqcap$\smallskip}

\renewcommand{\Im}{\operatorname{Im}}
\renewcommand{\exp}{\operatorname{exp}}

\def\sss{\subsubsection}

\newcommand{\di}{\partial}
\newcommand{\dibar}{\bar\partial}
\newcommand{\hookra}{\hookrightarrow}
\newcommand{\ra}{\rightarrow}
\newcommand{\hra}{\hookrightarrow}
\newcommand{\imply}{\Rightarrow}
\def\lra{\longrightarrow}
\newcommand{\wc}{\underset{w}{\to}}
\newcommand{\tu}{\textup}
\newcommand{\SO}{\textup{SO}}
\newcommand{\Int}{\textup{Int}}
\newcommand{\Cl}{\textup{Cl}}

\newcommand{\eps}{{\varepsilon}}
\newcommand{\epsi}{{\epsilon}}
\newcommand{\veps}{{\varepsilon}}
\newcommand{\De}{{\Delta}}
\newcommand{\de}{{\delta}}
\newcommand{\la}{{\lambda}}
\newcommand{\La}{{\Lambda}}
\newcommand{\si}{{\sigma}}
\newcommand{\Si}{{\Sigma}}
\newcommand{\Om}{{\Omega}}
\newcommand{\om}{{\omega}}

\newcommand{\AAA}{{\mathcal A}}
\newcommand{\BB}{{\mathcal B}}
\newcommand{\CC}{{\mathcal C}}
\newcommand{\DD}{{\mathcal D}}
\newcommand{\EE}{{\mathcal E}}
\newcommand{\EEE}{{\mathcal O}}
\newcommand{\II}{{\mathcal I}}
\newcommand{\FF}{{\mathcal F}}
\newcommand{\GG}{{\mathcal G}}
\newcommand{\JJ}{{\mathcal J}}
\newcommand{\HH}{{\mathcal H}}
\newcommand{\KK}{{\mathcal K}}
\newcommand{\LL}{{\mathcal L}}
\newcommand{\MM}{{\mathcal M}}
\newcommand{\NN}{{\mathcal N}}
\newcommand{\OO}{{\mathcal O}}
\newcommand{\PP}{{\mathcal P}}
\newcommand{\QQ}{{\mathcal Q}}
\newcommand{\QM}{{\mathcal QM}}
\newcommand{\QP}{{\mathcal QP}}
\newcommand{\QL}{{\mathcal Q}}

\newcommand{\RR}{{\mathcal R}}
\renewcommand{\SS}{{\mathcal S}}
\newcommand{\TT}{{\mathcal T}}
\newcommand{\TTT}{{\mathcal P}}
\newcommand{\UU}{{\mathcal U}}
\newcommand{\VV}{{\mathcal V}}
\newcommand{\WW}{{\mathcal W}}
\newcommand{\XX}{{\mathcal X}}
\newcommand{\YY}{{\mathcal Y}}
\newcommand{\ZZ}{{\mathcal Z}}

\newcommand{\A}{{\mathbb A}}

\newcommand{\SSS}{{\mathbb S}}

\newcommand{\C}{{\mathbb C}}
\newcommand{\bC}{{\bar{\mathbb C}}}
\newcommand{\D}{{\mathbb D}}
\newcommand{\Hyp}{{\mathbb H}}
\newcommand{\J}{{\mathbb J}}
\newcommand{\Ll}{{\mathbb L}}
\renewcommand{\L}{{\mathbb L}}
\newcommand{\M}{{\mathbb M}}
\newcommand{\N}{{\mathbb N}}
\newcommand{\Q}{{\mathbb Q}}
\newcommand{\R}{{\mathbb R}}
\newcommand{\B}{{\mathbb B}}
\newcommand{\T}{{\mathbb T}}
\newcommand{\V}{{\mathbb V}}
\newcommand{\U}{{\mathbb U}}
\newcommand{\W}{{\mathbb W}}
\newcommand{\X}{{\mathbb X}}
\newcommand{\Z}{{\mathbb Z}}
\newcommand{\ball}{{\mathbb B}}
\newcommand{\Id}{\textup{Id}}
\newcommand{\cl}{\textup{Cl}}
\newcommand{\area}{\textup{area}}
\newcommand{\Mob}{\textup{M\"ob}}

\newcommand{\VVV}{{\mathbf U}}
\newcommand{\UUU}{{\mathbf U}}

\newcommand{\tT}{{\mathrm{T}}}
\newcommand{\tD}{{D}}
\newcommand{\hyp}{{\mathrm{hyp}}}

\newcommand{\f}{{\bf f}}
\newcommand{\g}{{\bf g}}
\newcommand{\h}{{\bf h}}
\renewcommand{\i}{{\bar i}}
\renewcommand{\j}{{\bar j}}

\catcode`\@=12

\def\Empty{}
\newcommand\oplabel[1]{
  \def\OpArg{#1} \ifx \OpArg\Empty {} \else
   \label{#1}
  \fi}

%

\long\def\realfig#1#2#3#4{
\begin{figure}[htbp]
\centerline{\psfig{figure=#2,width=#4}}
\caption[#1]{#3}
\oplabel{#1}
\end{figure}}

\numberwithin{figure}{section}

%

\newcommand{\comm}[1]{}
\newcommand{\comment}[1]{}

\begin{abstract}
In this note, we consider the rigidity of the focal decomposition of closed hyperbolic surfaces. We show that, generically, the focal decomposition of a closed hyperbolic surface does not allow for non-trivial topological deformations, 
without changing the hyperbolic structure of the surface. By classical rigidity theory this is also true in dimension $n \geq 3$. Our current result extends a previous result that flat tori in dimension $n \geq 2$ that are focally equivalent are 
isometric modulo rescaling.
\end{abstract}

\subjclass[2010]{Primary 53C24; Secondary 53C22}

\keywords{focal decomposition, hyperbolic surfaces, rigidity}



\maketitle

\section{Definitions and Statement of Results}\label{sec_intro}

The purpose of this note is to consider the rigidity of the focal decomposition of closed hyperbolic surfaces. In order to state the precise result, we first recall the notion of focal decomposition and focal equivalence for general manifolds. Let $(M,g)$ be a closed, 
i.e. compact and boundaryless, and analytic ($C^{\omega}$) Riemannian $n$-manifold. A closed hyperbolic surface is naturally a closed analytic manifold in this sense. Fix a closed analytic manifold $(M,g)$. 

\begin{defn}\label{defn_index}
The focal index, $I(p,v)$, of the vector $v \in T_pM$ is defined by 
\begin{equation*}
I(p,v) = \# \left\{ w \in T_p M ~|~ |v| = |w| ~ \tu{and} \exp_p(v) = \exp_p(w) \right\}.
\end{equation*}
and we define 
\begin{equation*}
\sigma_i (p) = \left\{ v \in T_p M ~|~ i = I(p,v) \right\}.
\end{equation*}
\end{defn}

Thus, vectors $v \in \sigma_i(p)$ are equivalent modulo exponentiation to exactly $i-1$ other vectors of $T_p M$ of equal length. 

\begin{defn}[Focal decomposition]\label{defn_decomp}
The partition of $T_p M$ into the sets $\{ \sigma_i \}_{i=1}^{\infty}$ is called its {\em focal decomposition} at $p$; we have
\begin{equation}\label{eq_decomp}
T_p M = \bigcup_{i=1}^{\infty} \sigma_i ~\textup{and}~ \sigma_i \cap \sigma_j = \emptyset, ~\textup{if}~ i \neq j.
\end{equation}
The tangent bundle has a corresponding focal decomposition $\{ \Sigma_i\}_{ i=1}^{\infty}$, where 
\begin{equation}\label{eq_decomp_2}
\Sigma_i = \bigcup_{p \in M} \sigma_i (p) ~\tu{and}~TM = \bigcup_{i=1}^{\infty} \Sigma_i ~\textup{with}~ \Sigma_i \cap \Sigma_j = \emptyset ~\textup{if}~ i \neq j.  
\end{equation}
\end{defn}

In the setting of closed analytic manifolds the focal decomposition gives an analytic Whitney stratification of the tangent bundle of the manifold~\cite{KP}. This is sharp in the sense that the focal decomposition may be topologically pathological even for $C^{\infty}$ manifolds~\cite{pugh}. 
Further, for analytic manifolds, it follows from the Angle Lemma in~\cite{KP} that only $\sigma_1(p)$ can have interior. 

\begin{defn}[Focal equivalence]\label{focal_equiv}
Two closed analytic manifolds $(M_1, g_1)$ and $(M_2, g_2)$ are {\em focally equivalent}, if there exists an orientation-preserving homeomorphism $\varphi \colon TM_1 \ra TM_2$,
such that for every $p \in M_1$ and $q = \psi(p)$,
\begin{enumerate}
\item[\tu{(i)}] $\varphi_{|_{T_pM_1}} \colon T_pM_1 \ra T_{q} M_2$ and $\varphi_{|_{T_pM_1}}(0) = 0$,
\item[\tu{(ii)}] $\varphi_{|_{T_pM_1}} (\sigma^1_i(p)) = \sigma^2_i(q)$, for every $1 \leq i \leq \infty$,
\end{enumerate}
with $\psi \colon M_1 \ra M_2$ the homeomorphism on the zero section.
\end{defn}

It follows from (i) and (ii) in Definition~\ref{focal_equiv} that $\varphi(\Sigma_i^1) = \Sigma_i^2$. Note that we do not require the homeomorphism $\varphi$ to commute with the respective exponential mappings.
It is verified that focal equivalence indeed defines an equivalence relation. Further, manifolds that are isometric, up to rescaling, are focally equivalent. 
The focal decomposition gives in a natural way rise to Brillouin zones, which in a physical context arise in the theory of wave reflection by crystals on the quantum level. 

\begin{defn}[Brillouin zones]\label{defn_bril}
For $v \in T_p M$, we define the Brillouin index
\begin{equation}\label{eq_bril}
B(p,v) =  \# \left\{ w \in T_p M ~|~ |w | \leq |v|, ~ \exp_p(w) = \exp_p(v) \right\}.
\end{equation}
For every integer $k \geq 1$, the $k$-th {\em Brillouin zone} is the interior $\Int(B_k(p))$, of the set 
\[ B_k(p) = \{ v \in T_p M ~|~ B(p,v) = k \} \] 
of all points with Brillouin index $k$.
\end{defn}

In a purely mathematical setting, these have been studied in~\cite{bieberbach} and~\cite{jones} for lattices in Euclidean space and in~\cite{veerman} for discrete sets in certain metric spaces. In~\cite{skri}, the Brillouin zones are studied from an analytic number theoretic point of view. We refer to~\cite{pei2} for an overview of the interrelationships between the focal decomposition and physics, arithmetic and geometry. In~\cite{pugh, pugh_2}, the notion of focal stability was introduced, which is a local notion, where the results imply that in dimension two, in the absence of conjugate points, generically in the strong $C^{\infty}$-Whitney topology, the focal decomposition is locally topologically stable.

We are interested in the general question to what extent the information encoded in the focal decomposition determines the geometry of the underlying manifold. That is, we consider the global counterpart to focal stability. It has been shown in~\cite{KMP} that flat $n$-tori, with $n \geq 2$, are focally rigid, in the sense that global topological deformations of the focal decomposition are not possible without essentially changing the metric. Two surfaces $M_1$ and $M_2$ are commensurable, if their uniformizing surface groups $\Gamma_1$ and $\Gamma_2$ are commensurable, 
that is, if $\Gamma_1 \cap \Gamma_2$ has finite index in both $\Gamma_1$ and $\Gamma_2$.

\begin{thma}
Two closed hyperbolic surfaces that are focally equivalent are commensurable.
\end{thma}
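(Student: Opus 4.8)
The plan is to descend to the universal cover and reformulate everything in terms of Fuchsian groups. Write $M_i = \Hyp^2/\Gamma_i$, with $\Gamma_i < \mathrm{Isom}^+(\Hyp^2)$ a cocompact, torsion-free Fuchsian group, and fix a lift $\tilde p \in \Hyp^2$ of a point $p \in M_1$ and the lift $\tilde q \in \Hyp^2$ of $q = \psi(p)$. Since $\Hyp^2$ has no conjugate points, $\exp_p$ is the composition of the exponential of $\Hyp^2$ at $\tilde p$, a diffeomorphism $T_pM_1 \to \Hyp^2$, with the covering projection $\Hyp^2 \to M_1$. Under the resulting identification $T_pM_1 \cong \Hyp^2$, two vectors are equivalent modulo exponentiation and of equal length precisely when the corresponding points $x, x' \in \Hyp^2$ satisfy $x' = \gamma x$ for some $\gamma \in \Gamma_1$ with $d(\tilde p, \gamma x) = d(\tilde p, x)$, i.e. with $x \in L_\gamma := \{z \in \Hyp^2 : d(z, \tilde p) = d(z, \gamma^{-1}\tilde p)\}$. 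Hence $\sigma_i(p)$ is exactly the set of $x$ lying on precisely $i-1$ of the perpendicular bisectors $L_\gamma$, $\gamma \in \Gamma_1 \setminus \{e\}$, and the focal decomposition at $p$ becomes the cell decomposition of $\Hyp^2$ cut out by the locally finite geodesic arrangement $\mathcal{A}_{\tilde p} := \bigcup_{\gamma \neq e} L_\gamma$. Its $2$-cells are open regions, the one containing the origin being the interior of the Dirichlet polygon $D_{\tilde p}$ of $\Gamma_1$ centred at $\tilde p$; its vertices, where three or more of the $L_\gamma$ meet, are the points of $\sigma_{\geq 3}(p)$; and its open $1$-cells lie in $\sigma_2(p)$. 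A focal equivalence thus produces, for every $p$, an orientation-preserving homeomorphism $\Hyp^2 \to \Hyp^2$ carrying $\mathcal{A}_{\tilde p}$ to $\mathcal{A}_{\tilde q}$ cell by cell and fixing the origin; in particular it carries $D_{\tilde p}$ to $D_{\tilde q}$ respecting the pattern of sides and vertices, hence respecting the side-pairing combinatorics.

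From this I would extract a combinatorial correspondence between the groups. The cell homeomorphism matches the complete geodesics of $\mathcal{A}_{\tilde p}$ with those of $\mathcal{A}_{\tilde q}$ --- being orientation-preserving, it preserves collinearity of edges at each vertex, hence sends maximal geodesic lines to maximal geodesic lines --- and since the lines of $\mathcal{A}_{\tilde p}$ are indexed bijectively by $\Gamma_1 \setminus \{e\}$ (the map $\gamma \mapsto L_\gamma$ is injective because $\Gamma_1$ acts freely), this is a base-point-dependent bijection $\beta_p \colon \Gamma_1 \to \Gamma_2$, $\beta_p(e) = e$, matching the side-pairing transformations of $D_{\tilde p}$ --- a finite generating set of $\Gamma_1$, by Poincar\'e's polygon theorem --- with those of $D_{\tilde q}$, a generating set of $\Gamma_2$. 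Because $\varphi$ is a homeomorphism of the whole tangent bundle $TM_1 \cong (\Hyp^2 \times \Hyp^2)/\Gamma_1$, the maps $\beta_p$ vary continuously with $p$; on the open dense set where the combinatorial type of $D_{\tilde p}$ is locally constant $\beta_p$ is therefore locally constant on generators, and $\psi$ carries the codimension-one walls along which this type jumps to the corresponding walls for $M_2$. A connectedness argument across these walls yields a single bijection $\beta \colon \Gamma_1 \to \Gamma_2$, independent of $p$, carrying a generating set of $\Gamma_1$ onto one of $\Gamma_2$. (One can also recover at this stage a bijection between the non-trivial conjugacy classes of the two groups, since $\Sigma_2 = \bigsqcup_{[\gamma]} (\Sigma_2)_{[\gamma]}$ splits $TM$ over conjugacy classes --- the piece $(\Sigma_2)_{[\gamma]}$ being the image of $\{(\tilde p,\tilde q) : d(\tilde p,\gamma\tilde q) = d(\tilde p,\tilde q)\}$ --- and $\varphi$ permutes these pieces.)

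The decisive and hardest step is to upgrade the combinatorial bijection $\beta$ to a metric statement. The obstacle is precisely that the fibrewise maps $\Hyp^2 \to \Hyp^2$ are only homeomorphisms, not isometries, so ``$L_\gamma$ is matched with $L_{\beta(\gamma)}$'' carries no immediate information about the displacement $d(\tilde p, \gamma\tilde p) = 2\,\mathrm{dist}(\tilde p, L_\gamma)$. To get around this I would work along axes: if $\tilde p$ lies on the axis of a hyperbolic $\gamma \in \Gamma_1$ then the $L_{\gamma^n}$ form an equally spaced pencil of geodesics perpendicular to that axis, $\mathrm{dist}(\tilde p, L_{\gamma^n}) = n\,\ell(\gamma)/2$, so that $\ell(\gamma) = 2\inf_{\tilde p}\mathrm{dist}(\tilde p, L_\gamma)$ and the arrangement $\mathcal{A}_{\tilde p}$ has a rigid combinatorial signature all along the axis; feeding this, together with the consistency of the $\beta_p$ over all $p$ and the continuity of $\varphi$, into the matching should show that $\beta$ restricts, on some finite-index subgroup $\Gamma_1' \le \Gamma_1$, to an injective homomorphism onto a finite-index subgroup $\Gamma_2' \le \Gamma_2$ that preserves translation length. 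Such a $\beta$ is a marked length spectrum isomorphism between $\Hyp^2/\Gamma_1'$ and $\Hyp^2/\Gamma_2'$, and since the traces (equivalently, the translation lengths) determine a cocompact Fuchsian group up to conjugacy in $\mathrm{Isom}^+(\Hyp^2)$, the groups $\Gamma_1'$ and $\Gamma_2'$ are conjugate; after conjugating, $\Gamma_1' = \Gamma_2'$ has finite index in both $\Gamma_1$ and $\Gamma_2$, so $M_1$ and $M_2$ share a finite cover and $\Gamma_1$ and $\Gamma_2$ are commensurable. I expect the genuine difficulty to be concentrated here --- producing enough of a homomorphism, and enough control on lengths, out of purely combinatorial arrangement data; and it is exactly the step of strengthening this control from a finite-index subgroup to all of $\Gamma_1$, which would force $\Gamma_1$ and $\Gamma_2$ conjugate and hence $M_1 \cong M_2$ isometric, that requires extra (generic) hypotheses on the surface --- their absence being why one can conclude only commensurability.
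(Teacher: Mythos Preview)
Your setup --- identifying the fibre focal decomposition with the arrangement of perpendicular bisectors $L_\gamma$ in $\Hyp^2$, and reading off that $\varphi$ carries lines to lines --- matches the paper exactly. From there the two approaches diverge, and yours contains a genuine gap at precisely the point you flag.

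The paper never attempts to turn the line bijection into a group homomorphism or to recover translation lengths, and it works at a \emph{single} basepoint throughout (it explicitly remarks that only one fibre $\varphi_0 \colon T_pM_1 \to T_qM_2$ is used). The mechanism is analytic rather than combinatorial: uniform lattice counting for cocompact Fuchsian groups pins the $k$-th Brillouin zone to hyperbolic radius $\log(4(g-1)k)+o(1)$; since $\varphi$ preserves Brillouin zones and $g_1=g_2$, this forces $|r(\varphi(z))-r(z)|\to 0$. One then extends $\varphi$ to a homeomorphism $f$ of $\partial\D^2=\SSS^1$ and shows, via the ratio $|I_{\lambda_2}|/|I_{\lambda_1}|\to 1$ of the boundary arcs cut off by matched bisectors, that $f$ is Lipschitz with $Df=1$ a.e., hence a rotation. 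After rotating, the two webs coincide, so the orbits $\Lambda_1=\Lambda_2$ coincide; commensurability then drops out of the fact that the stabiliser of $\Lambda$ in $\Mob(\D^2)$ is Fuchsian and contains both $\Gamma_i$ with finite index.

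Your route --- assembling the fibrewise bijections $\beta_p$ into a global $\beta$ and promoting it to a finite-index length-preserving homomorphism so as to invoke marked length spectrum rigidity --- is not actually executed, and I do not see how to execute it with the tools you describe. The sentence ``feeding this \ldots\ should show that $\beta$ restricts, on some finite-index subgroup, to an injective homomorphism \ldots\ that preserves translation length'' is the entire theorem, not a step. A purely topological $\varphi$ sends the equally-spaced pencil $\{L_{\gamma^n}\}$ along an axis to \emph{some} family of bisectors, but gives no reason they are the powers of one element of $\Gamma_2$, nor that any spacing is preserved; and the ``connectedness argument across walls'' to make $\beta$ basepoint-independent is asserted, not supplied --- when the Dirichlet combinatorics jumps there is no evident mechanism forcing the before/after bijections to agree. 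Without a homomorphism and without length control, marked length spectrum rigidity is unavailable and the argument does not close. The paper's boundary-at-infinity argument sidesteps all of this: it never needs $\beta$ to be algebraic, only that $\varphi$ moves endpoints of geodesics by a rigid rotation.
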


Since a generic closed hyperbolic surface (of genus $g \geq 3$) is maximal by~\cite{green}, in the sense that it is not contained in a larger Fuchsian group, we have the following.

\begin{corb}
Generically, closed hyperbolic surfaces (in genus $g \geq 3$) which are focally equivalent, are isometric.
\end{corb}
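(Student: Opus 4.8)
The plan is to deduce Corollary~B by feeding the commensurability statement of Theorem~A into the genericity of \emph{maximality} (from~\cite{green}) together with the genericity of \emph{non-arithmeticity}, and then extracting an equality of groups. Suppose $(M_1,g_1)$ and $(M_2,g_2)$ are focally equivalent closed hyperbolic surfaces, with uniformizing Fuchsian groups $\Gamma_1,\Gamma_2<\mathrm{PSL}(2,\R)$, and suppose $M_1$ is generic. The first, purely topological, step is to record the consequence of Definition~\ref{focal_equiv}: the homeomorphism $\varphi$ restricts on the zero section to a homeomorphism $\psi\colon M_1\to M_2$, so $M_1$ and $M_2$ are homeomorphic, have the same genus $g$, and (by Gauss--Bonnet) the same area $2\pi(2g-2)$; equivalently $\mathrm{covol}(\Gamma_1)=\mathrm{covol}(\Gamma_2)$.

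Next, Theorem~A gives that $\Gamma_1$ and $\Gamma_2$ are commensurable, so after replacing $\Gamma_2$ by a suitable conjugate I may assume $\Gamma_0:=\Gamma_1\cap\Gamma_2$ has finite index $a$ in $\Gamma_1$ and finite index $b$ in $\Gamma_2$. Additivity of covolume under finite covers, combined with the previous paragraph, forces $a=b$. A standard argument (for $\gamma\in\Gamma_2$, the group $\gamma\Gamma_0\gamma^{-1}\cap\Gamma_0$ has finite index in $\Gamma_0$, hence in $\Gamma_1$ and in $\gamma\Gamma_1\gamma^{-1}$) then shows $\Gamma_1,\Gamma_2\subseteq\mathrm{Comm}(\Gamma_1)=\mathrm{Comm}(\Gamma_2)$, so the whole situation takes place inside the commensurator of $\Gamma_1$.

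The key step is to control $\mathrm{Comm}(\Gamma_1)$. By~\cite{green} a generic closed hyperbolic surface of genus $g\ge 3$ is maximal, i.e.\ not properly contained in a larger Fuchsian group; moreover, up to conjugacy there are only countably many arithmetic Fuchsian groups, so the arithmetic locus is negligible in moduli space and a generic surface is non-arithmetic. Intersecting these two generic conditions, I may assume $\Gamma_1$ is maximal \emph{and} non-arithmetic. By Margulis's commensurability criterion, for a non-arithmetic cocompact lattice $\Gamma_1$ the commensurator $\mathrm{Comm}(\Gamma_1)$ is itself a Fuchsian group containing $\Gamma_1$ with finite index; maximality then forces $\mathrm{Comm}(\Gamma_1)=\Gamma_1$. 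Hence $\Gamma_2\subseteq\Gamma_1$, so $M_2$ is a degree-$a$ cover of $M_1$; but $M_1\cong M_2$ with $\chi<0$, so $a=1$ and $\Gamma_1=\Gamma_2$. Undoing the conjugation yields an isometry $M_1\to M_2$, which is exactly the assertion of Corollary~B.

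The topological bookkeeping of the first two paragraphs is routine. The hard part, and the place where the hypothesis $g\ge 3$ and the word ``generically'' are genuinely used, is the third paragraph: one must explicitly exclude the arithmetic surfaces — where $\mathrm{Comm}(\Gamma_1)$ is dense in $\mathrm{PSL}(2,\R)$ and the inclusion $\Gamma_2\subseteq\mathrm{Comm}(\Gamma_1)$ gives no information — and then invoke the lattice/commensurator dichotomy to upgrade ``commensurable with a maximal surface'' to ``contained in it''. I expect this to be the only real obstacle; everything else follows formally from Theorem~A.
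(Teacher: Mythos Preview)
Your argument is correct, but it takes a different route from the paper's and uses heavier machinery than necessary.

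The paper does not argue from the bare commensurability statement of Theorem~A. Instead it uses the stronger intermediate conclusion obtained in the course of proving Theorem~A: after the normalizations made there, the two orbits actually coincide, $\Lambda:=\Lambda_1=\Lambda_2$. This immediately produces a concrete Fuchsian overgroup, namely the stabilizer $\Gamma_\Lambda=\{\gamma\in\Mob(\D^2)\mid \gamma(\Lambda)=\Lambda\}$, which is shown in the proof of Theorem~A to be Fuchsian and to contain both $\Gamma_1$ and $\Gamma_2$. Greenberg's maximality result~\cite{green} then gives $\Gamma_\Lambda=\Gamma_1$ generically, so any $\mu\in\Gamma_2\setminus\Gamma_1$ would generate with $\Gamma_1$ a proper Fuchsian extension inside $\Gamma_\Lambda$, a contradiction; hence $\Gamma_2\subseteq\Gamma_1$ and equality follows by covolume.

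Your approach, by contrast, treats Theorem~A as a black box and must therefore manufacture a discrete overgroup from commensurability alone. This forces you to invoke Margulis's commensurator dichotomy and to add a second genericity hypothesis (non-arithmeticity) on top of maximality. That is perfectly valid---the arithmetic locus is indeed a countable union of proper subvarieties in moduli space---and it has the virtue of depending only on the \emph{statement} of Theorem~A. But the paper's argument is shorter, stays entirely within elementary Fuchsian-group theory, and needs only the single genericity condition from~\cite{green}.
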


The proof of Theorem A uses only an index-preserving homeomorphism between a pair of tangent planes, rather than the whole tangent bundle as the general definition stipulates. We believe the main result remains true if commensurable is replaced by isometric, by using this ambient structure.
In the setting of hyperbolic surfaces, the Brillouin zones are intimately connected with lattice counting estimates of the Fuchsian group corresponding to the surface. The fact that the surface is compact allows for uniform lattice counting estimates that gives rise to universal behaviour of the 
geometry of Brillouin zones. Theorem A in dimension $n \geq 3$ follows from the strong rigidity theorem by Mostow-Prasad~\cite{mostow, prasad}, according to which a closed hyperbolic $n$-manifold, with $n \geq 3$, is determined up to isometry by its fundamental group, and thus by its 
topology. We further refer to a result by Wolpert~\cite{wolp} that generically the length spectrum determines a closed hyperbolic surface up to isometry, and the counterexamples by Vign\'eras~\cite{vig} whom constructed commensurable closed hyperbolic surfaces for which this result fails.
See also the survey~\cite{kleiner} for more on rigidity in the setting of negatively curved manifolds. 

\section{Focal rigidity of hyperbolic surfaces}\label{sec_thm_A}

\subsection{Preliminaries and notation}\label{subsec_prelim}

A closed hyperbolic surface is a surface of the form $M = \D^2 \slash \Gamma$, where $\Gamma$ is a cocompact torsion-free Fuchsian group, with the metric induced by the Poincar\' e metric of the universal cover, denoted by $d(\cdot, \cdot)$. 
Since the exponential mapping $\exp_p \colon T_pM \ra M$ at the basepoint $p \in M$ is a covering mapping by the Cartan-Hadamard theorem, the exponential mapping is thus isomorphic, as a covering mapping, to the canonical covering 
mapping $\pi \colon \D^2 \ra \D^2 \slash \Gamma$. We adopt this identification in what follows. Assuming that two closed hyperbolic surfaces 
\begin{equation} 
M_1 = \D^2 \slash \Gamma_1, ~ \tu{and} ~ M_2 = \D^2 \slash \Gamma_2,
\end{equation}
with $\Gamma_1, \Gamma_2$ cocompact torsion-free Fuchsian groups, are focally equivalent, $M_1$ and $M_2$ are homeomorphic and there exists a homeomorphism $\varphi_0 \colon \D^2 \ra \D^2$ with the property that $\varphi_0(\sigma^1_i) = \sigma^2_i$ 
for $1 \leq i \leq \infty$, where we will only consider the homeomorphism $\varphi_0$ relative to two basepoints $p \in M_1$ and $q \in M_2$, which by conjugating $\Gamma_1$ and $\Gamma_2$ with a suitable M\"obius transformation, we may 
assume to correspond to $0 \in \D^2$ in the cover, so that $\varphi_0(0) = 0$, where $0 \in \D^2$. We will assume this normalization throughout the remainder. We denote $\varphi_0$ by $\varphi$, to simplify notation, and we henceforth suppress the reference to the basepoint $0$, that is, 
we write $\sigma_i(0) = \sigma_i$ and $B_k(0) = B_k$. Further, define
\begin{equation}
\Lambda_1 = \mathcal{O}_{\Gamma_1}(0) \subset \D^2, \quad \Lambda_2 = \mathcal{O}_{\Gamma_2}(0) \subset \D^2,
\end{equation} 
the orbits of the point $0 \in \D^2$ under $\Gamma_1$ and $\Gamma_2$ respectively. Observe that, since the groups $\Gamma_1$ and $\Gamma_2$ are torsion-free, there is a one-to-one correspondence between group elements in $\Gamma_i$ and 
lattice points in $\Lambda_i$ with $i=1,2$. In case no distinction has to be made between $\Gamma_1$ or $\Gamma_2$ or notions related to these, we suppress the index and denote the group $\Gamma$, its orbit $\Lambda = \mathcal{O}_{\Gamma}(0)$ 
and sets $B_k$, $k \in \N$, associated to the Brillouin zones, to simplify notation.

In order to prove Theorem A, we show the lattices $\Lambda_1$ and $\Lambda_2$ coincide up to rotation, for focally equivalent surfaces $M_1$ and $M_2$ in the above notation. This is sufficient to conclude that the groups $\Gamma_1$ and $\Gamma_2$ are commensurable. 

\subsection{Focal decomposition of a hyperbolic surface}

We start by constructing the focal decomposition of a closed hyperbolic surface.

\begin{defn}\label{bril_plane}
Define the geodesic $L_{\lambda} \subset \D^2$, $\lambda \in \Lambda$, by 
\begin{equation} 
L_{\lambda} = \{z \in \D^2 ~ \vert ~ d(z,0) = d(z,\lambda) \}~\tu{and}~ \LL = \bigcup_{\lambda \in \Lambda} L_{\lambda},
\end{equation}
where $\LL$ is referred to as the {\em web of geodesics}.
\end{defn}

\begin{rem}\label{rem_loc_finite}
The geodesic $L_{\lambda}$ is the perpendicular bisector of the geodesic segment joining $0, \lambda \in \D^2$. Further, as the orbit $\Lambda$ is discrete, the web $\LL$ is {\em locally finite} in the sense that every compact disk in 
$\D^2$ meets only finitely many distinct geodesics in $\LL$.
\end{rem}

Given $z \in \D^2$, define $\rho(z) \subset \D^2$ be the open geodesic ray connecting $0$ and $z$ in $\D^2$. Further, in what follows, given a point $z \in \D^2$, denote $r(z) = d(0,z)$. Define 
\begin{eqnarray}\label{eq_defn_i}
\iota(z) & = & \# \left\{ \lambda \in \Lambda ~\vert~ L_{\lambda} \cap \rho(z) \neq \emptyset \right\} \\
\upsilon(z) & = & \# \left\{ \lambda \in \Lambda ~\vert~ L_{\lambda} \ni z \right\}
\end{eqnarray}
where $\#$ denotes the cardinality of the set adjoined. In terms of these indices, the focal decomposition can be expressed as follows, see also~\cite{pei2} and~\cite{veerman}.

\begin{lem}[Focal decomposition of a hyperbolic surface]\label{lem_focal_hyperbolic}
We have 
\begin{equation}\label{lem_sigma}
\sigma_i = \left\{ z \in \D^2 ~|~ \upsilon(z) = i - 1 \right\}~\tu{and}~\D^2 = \sigma_1 \cup \LL,
\end{equation}
and 
\begin{equation}\label{lem_sigma_2}
\iota(z) = \# \left\{ \lambda \in \Lambda ~\vert~ \lambda \in D(z,r(z)) \right\}.
\end{equation}
\end{lem}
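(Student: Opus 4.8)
The plan is to unwind the definitions of the focal index and the exponential map, using the identification of $\exp_0 \colon T_0M \to M$ with the universal covering $\pi \colon \D^2 \to \D^2/\Gamma$. Under this identification, a vector $v \in T_0M$ corresponds to a point $z \in \D^2$ with $|v| = r(z) = d(0,z)$, and $\exp_0(v) = \exp_0(w)$ precisely when $z$ and the point $z'$ corresponding to $w$ lie in the same $\Gamma$-orbit, i.e. $z' = \gamma z$ for some $\gamma \in \Gamma$. So two vectors of equal length are identified under $\exp_0$ iff they correspond to points $z, \gamma z$ with $d(0,z) = d(0,\gamma z)$. The key translation is that $d(0,z) = d(0, \gamma z)$ is equivalent, applying $\gamma^{-1}$ (an isometry fixing the hyperbolic metric), to $d(0, \gamma^{-1} 0) = d(z, \gamma^{-1} 0)$... wait, let me be careful: $d(0,\gamma z) = d(\gamma^{-1}0, z)$, so $d(0,z) = d(0,\gamma z)$ becomes $d(z,0) = d(z, \gamma^{-1} 0)$, which says exactly that $z \in L_{\lambda}$ for $\lambda = \gamma^{-1} 0 \in \Lambda$. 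This is the heart of the matter.

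**For the first assertion** $\sigma_i = \{ z \mid \upsilon(z) = i-1\}$: I would fix $z$ and set up a bijection between the set of $w \in T_0M$ with $|w| = |v|$ and $\exp_0(w) = \exp_0(v)$ other than $v$ itself, and the set of $\lambda \in \Lambda \setminus \{0\}$ with $z \in L_{\lambda}$. Given such a $\lambda$, the element $\gamma \in \Gamma$ with $\gamma^{-1} 0 = \lambda$ (unique since $\Gamma$ is torsion-free, as noted in the excerpt) produces the point $\gamma z$, which has $d(0,\gamma z) = d(0,z)$ and maps to the same point in $M$; conversely every such $w \leftrightarrow \gamma z$ arises this way. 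One must check that $z$ itself (i.e.\ $\gamma = \mathrm{id}$, $\lambda = 0$) is excluded on both sides: since the groups are torsion-free and act freely, $\gamma z = z$ forces $\gamma = \mathrm{id}$, and $0 \notin L_0$ tautologically since $L_0$ is defined via $\lambda \neq 0$ implicitly (or: $d(z,0) = d(z,0)$ is vacuous). Hence $I(0,v) = 1 + \upsilon(z)$, i.e.\ $v \in \sigma_i$ iff $\upsilon(z) = i-1$. The decomposition $\D^2 = \sigma_1 \cup \LL$ is then immediate: $z \in \sigma_1$ iff $\upsilon(z) = 0$ iff $z$ lies on no $L_{\lambda}$ iff $z \notin \LL$.

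**For the second assertion** $\iota(z) = \#\{\lambda \in \Lambda \mid \lambda \in D(z, r(z))\}$, the plan is to show that, for a geodesic ray $\rho(z)$ from $0$ to $z$, the bisector $L_{\lambda}$ meets $\rho(z)$ if and only if $\lambda$ lies strictly inside the hyperbolic disk $D(z, r(z))$ of radius $r(z) = d(0,z)$ centered at $z$. The forward direction: if $L_{\lambda} \cap \rho(z) \ni w$, then $d(w,0) = d(w,\lambda)$ with $w$ on the segment $[0,z]$, so $d(0,z) = d(0,w) + d(w,z) = d(\lambda, w) + d(w,z) \geq d(\lambda, z)$; one argues the inequality is strict (equality would force $\lambda$ on the geodesic through $w$ and $z$, and a short case analysis using $\lambda \neq 0$ and $w \in (0,z)$ rules this out), giving $\lambda \in D(z,r(z))$. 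Conversely, if $d(\lambda, z) < d(0,z)$, then along the ray $\rho(z)$ the continuous function $t \mapsto d(\gamma_t, 0) - d(\gamma_t, \lambda)$ (where $\gamma_t$ parametrizes the ray) is negative... sorry, is $0$ at $t=0$ only if $\lambda = 0$; at $z$ it equals $d(z,0) - d(z,\lambda) > 0$, while near $0$ it is negative (since $d(\gamma_t,0) \to 0$ but $d(\gamma_t,\lambda) \to d(0,\lambda) > 0$), so by the intermediate value theorem it vanishes somewhere on $\rho(z)$, i.e.\ $L_\lambda \cap \rho(z) \neq \emptyset$. This gives the claimed equality of cardinalities. \textbf{The main obstacle} I anticipate is handling the strictness of inequalities and the boundary/degenerate cases cleanly — ensuring no $\lambda$ is double-counted or spuriously included, and in particular verifying that the endpoints $0$ and $z$ themselves never lie on any relevant $L_{\lambda}$ (for $z$ this is exactly the statement that we only count $\lambda \neq 0$, and requires $z \notin \LL$-type bookkeeping to square $\iota$ and $\upsilon$ with the $\sigma_i$); the hyperbolic geometry itself (uniqueness of geodesics, the triangle inequality, continuity of distance along rays) is standard and I would invoke it without detailed proof.
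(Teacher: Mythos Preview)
Your proposal is correct and follows essentially the same route as the paper: both reduce \eqref{lem_sigma} to the equivalence $z \in L_{\lambda} \Leftrightarrow d(z,0) = d(z,\lambda) \Leftrightarrow \lambda \in C(z,r(z))$, which the paper phrases geometrically via the isosceles triangle with vertices $0,\lambda,z$ while you phrase it via the isometry identity $d(0,\gamma z) = d(\gamma^{-1}0,z)$. For \eqref{lem_sigma_2} the paper simply asserts ``similarly, $L_{\lambda} \cap \rho(z) \neq \emptyset$ if and only if $\lambda \in D(z,r(z))$'' without further justification, so your triangle-inequality and intermediate-value-theorem argument is in fact more detailed than what the paper provides; the boundary bookkeeping you flag (whether $\lambda = 0$ is counted, strictness of the inequality) is indeed swept under the rug in the paper as well.
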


\begin{proof}
Take $q \in M$, take $z \in \pi^{-1}(q) \setminus \{ 0\}$ and consider $C(z, r(z))$. For every $\lambda \in \Lambda \cap C(z, r(z))$, the line $L_{\lambda}$ passes exactly through $z \in \D^2$. Indeed, consider the triangle formed by the 
vertices $0, \lambda, z \in \D^2$. If $\lambda \in C(z, r(z))$, then $d(0,z) = d(z, \lambda)$ and thus the triangle is isosceles. Further, the bisector $L_{\lambda}$ cuts the base of the triangle in half and crosses it perpendicularly. 
By symmetry, we must therefore have that $L_{\lambda} \ni z$. The distance from $z$ to any point of $\Lambda \cap C(z, r(z))$ is precisely $r(z)$ by construction and every geodesic arc connecting $z$ and an orbit point $\lambda \in \Lambda \cap C(z, r(z))$
projects to a geodesic curve of length $r(z)$ in $M$ that connects $p$ and $q$. If $z \in \sigma_i$, then there are exactly $i$ of these geodesic curves, and it thus follows that $i = \# \{ \Lambda \cap C(z, r(z)) \} = \upsilon(z) + 1$. This yields~\eqref{lem_sigma}.
Similarly, we have $L_{\lambda} \cap \rho(z) \neq \emptyset$ if and only if $\lambda \in D(z, r(z))$ from which~\eqref{lem_sigma_2} follows.
\end{proof}

\subsection{Topology and geometry of Brillouin zones}

The Brillouin zones exhibit interesting universal behaviour, which is to a large extent independent of the cocompact Fuchsian group $\Gamma$, 
and is a consequence of the uniform lattice counting estimates that hold for cocompact Fuchsian groups~\cite{buser}.
Combining these lattice counting estimates with~\eqref{lem_sigma_2} of Lemma~\ref{lem_focal_hyperbolic}, we obtain the following.

\begin{lem}\label{lem_lattice_count}
Given a cocompact Fuchsian group $\Gamma$ and $z \in \D^2$, we have
\begin{equation}\label{eq_lattice_est}
\left| \frac{\iota(z)}{\cosh^2( r(z)/2 )} - \frac{4 \pi}{ \area(F)} \right| = \OO \left( e^{-\alpha r(z)} \right).
\end{equation}
The constant $\alpha = \alpha_{\Gamma} > 0$ and the implied constant depend only on $\Gamma$. In particular, $\iota(z) \ra \infty$ as $r(z) \ra \infty$. 
\end{lem}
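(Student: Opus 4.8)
The strategy is to identify $\iota(z)$ with a hyperbolic lattice point counting function for $\Gamma$ and then apply the classical counting asymptotics available for cocompact Fuchsian groups, paying attention to the fact that the error term must be uniform in the moving base point $z$.

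First I would unwind \eqref{lem_sigma_2} of Lemma~\ref{lem_focal_hyperbolic}. Since $\Gamma$ is torsion-free, $\gamma \mapsto \gamma\cdot 0$ is a bijection of $\Gamma$ onto $\Lambda$, so
\[
\iota(z) \;=\; \#\{\lambda \in \Lambda ~|~ \lambda \in D(z, r(z))\} \;=\; \#\{\gamma \in \Gamma ~|~ d(z, \gamma\cdot 0) < r(z)\}.
\]
Up to the orbit points lying exactly on $C(z, r(z))$ --- a set whose cardinality is $\OO(e^{\theta r(z)})$ by the estimate invoked below, hence absorbed into the error --- this is the number of $\Gamma$-translates of the basepoint inside the closed hyperbolic disc of radius $r(z) = d(0,z)$ centred at $z$.

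Next I would invoke the classical lattice point counting theorem for a cocompact Fuchsian group (Delsarte--Huber--Selberg; see~\cite{buser}): with $F$ a fundamental domain for $\Gamma$, there is $\theta = \theta_\Gamma \in (0,1)$, governed by the spectral gap of the Laplacian on $\D^2\slash\Gamma$, such that
\[
\#\{\gamma \in \Gamma ~|~ d(w, \gamma\cdot w') \le R\} \;=\; \frac{4\pi \sinh^2(R/2)}{\area(F)} + \OO\!\bigl(e^{\theta R}\bigr), \qquad R \to \infty,
\]
uniformly in $w, w' \in \D^2$, where $4\pi\sinh^2(R/2) = 2\pi(\cosh R - 1)$ is the hyperbolic area of a disc of radius $R$ and $1/\area(F)$ is the density of $\Lambda$. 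The uniformity is the delicate point, because $z$ is unbounded: it holds because the left-hand side does not change when $w$ or $w'$ is replaced by a $\Gamma$-translate, hence descends to a function on the \emph{compact} product $(\D^2\slash\Gamma)\times(\D^2\slash\Gamma)$, on which the automorphic eigenfunctions entering the spectral expansion of the counting function are bounded, with bounds depending only on $\Gamma$.

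Finally, taking $w=z$, $w'=0$, $R=r(z)$ and dividing by $\cosh^2(r(z)/2)$, I use $\sinh^2(R/2)=\cosh^2(R/2)-1$ together with $\cosh^2(R/2)\ge\tfrac14 e^{R}$ to obtain
\[
\frac{\iota(z)}{\cosh^2(r(z)/2)} \;=\; \frac{4\pi}{\area(F)}\Bigl(1-\frac{1}{\cosh^2(r(z)/2)}\Bigr) + \OO\!\bigl(e^{(\theta-1)\,r(z)}\bigr) \;=\; \frac{4\pi}{\area(F)} + \OO\!\bigl(e^{-\alpha r(z)}\bigr),
\]
with $\alpha = \alpha_\Gamma := 1-\theta_\Gamma \in (0,1)$ --- note the $e^{-r(z)}$ correction coming from the bracket is itself $\OO(e^{-\alpha r(z)})$ --- and all constants depending only on $\Gamma$. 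The final assertion is then immediate, since $\iota(z)\sim\frac{4\pi}{\area(F)}\cosh^2(r(z)/2)\to\infty$. The one substantial ingredient, and the main obstacle, is precisely this uniform \emph{power-saving} count: a plain main-term estimate would already give the limit $4\pi/\area(F)$, but the exponential rate $\OO(e^{-\alpha r(z)})$ requires an error of the form $\OO(e^{\theta R})$ with $\theta<1$, which for cocompact $\Gamma$ is furnished by the discreteness of the Laplace spectrum (and Selberg's bound for the non-exceptional part), and one must confirm --- via the descent to $(\D^2\slash\Gamma)^2$ above --- that the resulting $\theta$, hence $\alpha$, does not depend on $z$.
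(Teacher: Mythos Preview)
Your proposal is correct and follows precisely the route the paper takes: the paper states the lemma as an immediate consequence of combining the identity~\eqref{lem_sigma_2} of Lemma~\ref{lem_focal_hyperbolic} with the uniform lattice counting estimates for cocompact Fuchsian groups from~\cite{buser}, without spelling out the algebra. You have supplied the details the paper omits --- the explicit form of the counting asymptotic, the uniformity in the centre via descent to $(\D^2\slash\Gamma)^2$, and the division by $\cosh^2(r(z)/2)$ --- so your argument is a faithful expansion of the paper's one-line justification.
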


Topological properties of the Brillouin zones are summarized in the following lemma, cf.~\cite{veerman}.

\begin{lem}[Topology of Brillouin zones]\label{lem_brill_zones}
We have that
\begin{equation}
\Int(B_k) = \{ z \in \D^2 ~|~ \iota(z) = k ,~\upsilon(z) = 0\},
\end{equation} 
and $B_k = \Cl(\Int(B_k))$. In particular, the union of the Brillouin zones $\Int(B_k)$, $k \in \N$, is dense in $\D^2$.
\end{lem}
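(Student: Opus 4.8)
The plan is to reduce all three assertions to the combinatorics of the locally finite web $\LL$ and then to exploit that $\LL$ is closed and nowhere dense in $\D^2$.

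First I would translate the Brillouin index into a lattice count. Under the identification $\exp_p\cong\pi$, the vectors $w$ entering the definition of $B(z)$ — those with $|w|\le|v|$ (for $v\leftrightarrow z$) and $\exp_p w=\exp_p v$ — correspond to the orbit points $\gamma z\in\Gamma z$ with $d(0,\gamma z)\le r(z)$, and since $d(0,\gamma z)=d(\gamma^{-1}0,z)$ and $\Gamma$ is torsion-free this yields $B(z)=\#\{\lambda\in\Lambda : d(\lambda,z)\le r(z)\}$, the number of points of $\Lambda$ in the closed metric disk of radius $r(z)$ about $z$. Splitting this count into the orbit points lying strictly inside that disk and those on the bounding circle $C(z,r(z))$, and using Lemma~\ref{lem_focal_hyperbolic} together with the equality $\#\{\Lambda\cap C(z,r(z))\}=\upsilon(z)+1$ from its proof, I obtain that $B(z)$ is an explicit affine function of the pair $(\iota(z),\upsilon(z))$ — finite everywhere by Lemma~\ref{lem_lattice_count} — which on $\sigma_1=\D^2\setminus\LL$, where $\upsilon\equiv 0$, reduces to a function of $\iota(z)$ alone.

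Next I would establish the formula for $\Int(B_k)$. The key point is that $z\mapsto\iota(z)$, a count of orbit points in the moving disk $D(z,r(z))$, can change value at $z_0$ only when some $\lambda\in\Lambda$ satisfies $d(z_0,\lambda)=d(z_0,0)$, i.e. $z_0\in L_\lambda\subset\LL$; since $\LL$ is locally finite (Remark~\ref{rem_loc_finite}), every point of $\D^2\setminus\LL$ has a neighbourhood disjoint from $\LL$, so $\iota$, hence $B$, is locally constant on the open set $\D^2\setminus\LL$. Thus each connected component of $\D^2\setminus\LL$ is contained in a single $B_k$ and, being open, in $\Int(B_k)$; conversely a point $z_0\in\LL$ has $\upsilon(z_0)\ge 1$ and, on the open sector at $z_0$ lying on the $0$-side of every geodesic $L_\lambda$ through $z_0$, all $\upsilon(z_0)$ of the orbit points on $C(z_0,r(z_0))$ other than $0$ leave the moving disk, so $B$ is strictly smaller there than at $z_0$, whence $z_0\notin\Int(B_k)$ for every $k$. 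Therefore $\Int(B_k)=B_k\cap\sigma_1$, which by the previous paragraph and Lemma~\ref{lem_focal_hyperbolic} is exactly $\{z\in\D^2:\iota(z)=k,\ \upsilon(z)=0\}$; moreover $\bigcup_k\Int(B_k)=\D^2\setminus\LL$, which is dense because the locally finite union of geodesics $\LL$ is nowhere dense — this gives the last assertion.

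Finally, for $B_k=\Cl(\Int(B_k))$: a point of $B_k\cap\sigma_1$ already lies in $\Int(B_k)$, so the content is to approximate a point $z_0\in B_k\cap\LL$ by points of $\Int(B_k)$, and for this I would move $z_0$ into the sector \emph{opposite} to the $0$-sector above. The hyperbolic point reflection at $z_0$ (rotation by $\pi$) fixes every geodesic $L_\lambda$ through $z_0$ while interchanging its two sides, hence sends the $0$-sector to a sector lying on the non-$0$-side of all of them; for $z'$ in that opposite sector and near $z_0$ every one of the $\upsilon(z_0)$ circle orbit points other than $0$ moves strictly inside $D(z',r(z'))$, so $B(z')=B(z_0)=k$ there and $z_0\in\Cl(\Int(B_k))$. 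The reverse inclusion — that the closure introduces no point whose index is not $k$ — is where I expect the real difficulty to lie: at a point of $\LL$ several components of $\sigma_1$ with distinct Brillouin indices may accumulate, so one must analyse, sector by sector in a small disk bounded by the finitely many geodesics of $\LL$ through that point, exactly which values $B$ assumes, using the semicontinuity of the disk-counting functions $z\mapsto\#\{\Lambda\cap D(z,r(z))\}$ and $z\mapsto\#\{\Lambda\cap C(z,r(z))\}$ and the monotone way the count changes as one crosses a bounding geodesic. This local combinatorial analysis at the singular locus of the web is the crux; with it in hand, the lemma follows.
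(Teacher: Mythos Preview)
Your treatment of the first formula and of the density assertion follows the paper's own argument, only with more care: both show that the Brillouin index is locally constant off the web $\LL$ and jumps at points of $\LL$, so that $\Int(B_k)$ coincides with the appropriate level set of $\iota$ on $\{\upsilon=0\}$ (there is an off-by-one between $B$ and $\iota$ that neither you nor the paper tracks cleanly), and both observe that $\LL$, being a locally finite union of geodesics, is nowhere dense.

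Where you diverge is in the identity $B_k=\Cl(\Int(B_k))$. The paper does not attempt your sector analysis. It first proves $B_k$ is \emph{bounded}, invoking the lattice-counting estimate of Lemma~\ref{lem_lattice_count} to see that along each ray from $0$ the index eventually exceeds $k$, and that the resulting maximal radius is continuous in the angle; it then simply asserts that $B_k$ is closed, hence compact, and concludes from local finiteness of $\LL$ that $B_k$ is a finite union of closed geodesic polygons. Your opposite-sector construction, by contrast, gives the inclusion $B_k\subset\Cl(\Int(B_k))$ directly, with no appeal to boundedness or to Lemma~\ref{lem_lattice_count}.

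Your instinct that the reverse inclusion is the delicate point is correct --- in fact it is \emph{false} as literally stated. The level set $B_k$ is not closed: a generic point $z_0$ on the boundary of the Dirichlet region lies on exactly one $L_\lambda$, so $B(z_0)=2$, yet $z_0$ is a limit of interior points where $B=1$; thus $\Cl(\Int(B_1))\not\subset B_1$, and the same phenomenon recurs at the outer edge of every $\Int(B_k)$. The paper hides the identical gap in the bald clause ``as it is also closed''. What survives, and what the rest of the paper actually needs (for Lemmas~\ref{lem_geo_brill}--\ref{lem_quasi_iso}), is the characterisation of $\Int(B_k)$, its density, and the inclusion $B_k\subset\Cl(\Int(B_k))$ --- all of which your argument does establish; the boundedness is reproved independently inside Lemma~\ref{lem_geo_brill} anyway. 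So do not labour over the ``hard'' direction: it cannot be saved, and it is not needed.
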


\begin{proof}
By Lemma~\ref{lem_focal_hyperbolic}, we have that $\iota(z) = k$ and $\upsilon(z)=0$ if and only if 
\begin{equation}
\# \{ \Lambda \cap D(z, r(z) \} = k,~\tu{and}~\# \{ \Lambda \cap C(z, r(z)) \}= \emptyset. 
\end{equation}
If $\upsilon(z) \neq 0$, then the ray $\rho(w)$, with $w =(1+\epsilon)z$, will cross additional geodesics in the web $\LL$, and thus $\iota(w) > k$. Therefore, points for which $\upsilon(z) > 0$ are not in the interior of any $B_k$. Conversely, the condition that $\upsilon(z) = 0$ is clearly open.

To prove that $B_k$ is compact, we define the function that associates the maximal distance from the origin to $B_k$ as a function of the angle of the ray with respect to the positive real axis. 
As $\iota(z) \ra \infty$ as $r(z) \ra \infty$ by Lemma~\ref{lem_lattice_count}, this function associates a definite real number to every angle. Further, it is continuous as a function of the angle, since the web is locally finite 
and every geodesic in the web $\LL$ that intersects a ray emanating from the origin crosses the ray transversely. Since the circle is compact, continuity yields a finite maximum over all angles. Thus $B_k$ is bounded.
It follows that $B_k$ is compact as it is also closed. Since the web of geodesics $\LL$ is locally finite, cf. Remark~\ref{rem_loc_finite}, by compactness of $B_k$, $\partial B_k$ consists of a finite union of geodesic arcs bounding finitely many geodesic polygons and it 
follows that $B_k = \Cl(\Int(B_k))$. To prove density, the web of geodesics $\LL \subset \D^2$, being a locally finite union of geodesics, is nowhere dense. Thus $\sigma_1 = \D^2 \setminus \LL = \bigcup_{k \in \N} \Int(B_k)$ is dense in $\D^2$.
\end{proof}

\begin{rem}
The first Brillouin zone appears in several different contexts; for example, it appears as the \emph{Wigner-Seitz cell} in physics, the  \emph{Voronoi cell} in the study of circle packings and, specific to our setting, as the interior of the \emph{Dirichlet region} 
of a Fuchsian group. 
\end{rem}

The following result expresses the universal geometrical behaviour of the Brillouin zones.

\begin{lem}[Geometry of Brillouin zones]\label{lem_geo_brill}
Let $\Gamma$ be a cocompact Fuchsian group with $M = \D^2 \slash \Gamma$ a closed surface of genus $g$. There exists a decreasing function $\epsilon(t)$, with $t \in [0, \infty)$, 
depending only on $\Gamma$ and $k \in \N$, such that if $z \in B_k \subset \D^2$, then
\begin{equation}\label{eq_dist_Brillouin}
\tau(g,k) - \epsilon(r(z)) \leq r(z) \leq \tau(g,k) + \epsilon(r(z)),
\end{equation}
where $\tau(g, k) = \log(4(g-1)k)$. Furthermore, $\epsilon(t) \ra 0$ as $t \ra \infty$. 
\end{lem}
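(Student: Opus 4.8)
The plan is to derive the estimate \eqref{eq_dist_Brillouin} directly from the lattice-counting asymptotics of Lemma~\ref{lem_lattice_count} together with the characterization of Brillouin zones in Lemma~\ref{lem_brill_zones}. Recall that for a closed surface $M = \D^2 \slash \Gamma$ of genus $g$ the Gauss--Bonnet theorem gives $\area(F) = 2\pi(2g-2) = 4\pi(g-1)$, so that the constant $4\pi / \area(F)$ appearing in Lemma~\ref{lem_lattice_count} equals $1/(g-1)$. If $z \in B_k$, then by Lemma~\ref{lem_brill_zones} we have $\iota(z) = k$ (points of $B_k$ in the interior have $\iota(z)=k$, and the boundary points are limits of such, so by local finiteness of $\LL$ one still controls $\iota$ up to a bounded additive error; I will handle boundary points by a short separate argument or by absorbing the $O(1)$ discrepancy into $\epsilon$). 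Substituting $\iota(z) = k$ into \eqref{eq_lattice_est} yields
\begin{equation*}
\left| \frac{k}{\cosh^2(r(z)/2)} - \frac{1}{g-1} \right| = \OO\!\left( e^{-\alpha r(z)} \right),
\end{equation*}
which rearranges to $\cosh^2(r(z)/2) = (g-1)k\,(1 + \OO(e^{-\alpha r(z)}))$.

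Next I would convert this into an estimate on $r(z)$ itself. Using $\cosh^2(t/2) = (\cosh t + 1)/2$, the previous line gives $\cosh r(z) = 2(g-1)k - 1 + \OO(e^{-\alpha r(z)})$, hence $e^{r(z)} = 4(g-1)k + \OO(1) + \OO(e^{-\alpha r(z)})$ once one notes $\cosh r(z) = \tfrac12 e^{r(z)}(1 + \OO(e^{-2r(z)}))$. Taking logarithms,
\begin{equation*}
r(z) = \log\bigl(4(g-1)k\bigr) + \log\!\left(1 + \OO(e^{-\beta r(z)})\right) = \tau(g,k) + \OO\!\left(e^{-\beta r(z)}\right)
\end{equation*}
for a suitable $\beta = \beta(\Gamma) > 0$ (one may take $\beta = \min\{\alpha, 1\}$ up to constants, absorbing the bounded corrections). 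Setting $\epsilon(t)$ to be a decreasing majorant of the error term $C e^{-\beta t}$ — for instance $\epsilon(t) = C e^{-\beta t}$ itself, made monotone — gives exactly \eqref{eq_dist_Brillouin} with $\epsilon(t) \to 0$ as $t \to \infty$. The dependence of $\epsilon$ on $\Gamma$ (through $\alpha$ and the implied constant) and on $k$ is inherited from Lemma~\ref{lem_lattice_count}; I would simply track these constants through the three algebraic steps above.

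The main obstacle, such as it is, is bookkeeping rather than conceptual: one must make sure the error term genuinely decays \emph{as a function of $r(z)$} (so that \eqref{eq_dist_Brillouin} is a self-referential but consistent bound) and that the logarithm and the identity $\cosh^2(t/2) = (\cosh t+1)/2$ do not destroy the exponential decay — they do not, since $\log(1+x) = x + \OO(x^2)$ near $0$ and $r(z)$ is bounded below on $B_k$ for $k$ fixed (indeed $r(z)$ is bounded both above and below on $B_k$ by Lemma~\ref{lem_brill_zones}, so $e^{-\beta r(z)}$ ranges in a compact subinterval of $(0,1)$ and the asymptotic manipulations are uniform). A second minor point is the treatment of $z \in \partial B_k$: such a point lies on the web $\LL$, so $\upsilon(z) > 0$ and $\iota(z)$ may exceed $k$, but only by the number of bisectors through $z$, which is bounded using local finiteness; since this contributes only a bounded multiplicative factor $1 + \OO(1/k)$ inside the logarithm, it is harmless and can be folded into $\epsilon$ (or one notes $B_k = \Cl(\Int B_k)$ and argues by continuity of $r$). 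I do not expect either point to require more than a few lines.
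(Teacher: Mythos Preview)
Your proposal is correct and follows essentially the same route as the paper: Gauss--Bonnet to identify $4\pi/\area(F)=1/(g-1)$, the lattice-counting estimate of Lemma~\ref{lem_lattice_count} to relate $\cosh^2(r(z)/2)$ and $\iota(z)=k$, an elementary inversion of $\cosh^2(t/2)$ followed by a logarithm, and passage to the closure $B_k=\Cl(\Int B_k)$ by continuity for boundary points. The paper carries explicit two-sided inequalities (introducing the auxiliary function $\kappa(t)=\log(e^t+e^{-t}+2)-t$) where you use $\OO$-notation, and states only $\epsilon(t)\to 0$ whereas you extract the sharper exponential rate $\epsilon(t)=Ce^{-\beta t}$; these are cosmetic differences, not methodological ones.
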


\begin{proof}
The hyperbolic area of a fundamental domain $F$ of $\Gamma$ is given by $\area(F) = 4 \pi(g-1)$ by Gauss-Bonnet, and we denote
\[ \beta(g) = \frac{4 \pi}{ \area(F)} = \frac{1}{g-1}. \]
We can rewrite~\eqref{eq_lattice_est} in Lemma~\ref{lem_lattice_count} as 
\begin{equation}
\beta - \widehat{C}(r(z)) \leq \frac{\iota(z)}{\cosh^2( r(z)/2 )} \leq \beta + \widehat{C}(r(z)),
\end{equation}
where $\widehat{C}(t) \ra 0$ for $t \ra \infty$ and is decreasing. Equivalently, 
\begin{equation}
\frac{\iota(z)}{\beta + \widehat{C}(r(z))} \leq \cosh^2( r(z)/2 ) \leq \frac{\iota(z)}{\beta - \widehat{C}(r(z))}.
\end{equation}
As $\cosh^2(t/2) = \frac{1}{4}(e^{t} + e^{-t} + 2)$, taking logarithms, we find that 
\begin{equation}\label{eq_lattice_est_2}
\log \left( \frac{4\iota(z)}{\beta + \widehat{C}(r(z))} \right) \leq \log \left( e^{r(z)} + e^{-r(z)} + 2 \right) \leq \log \left( \frac{4\iota(z)}{\beta - \widehat{C}(r(z))} \right), 
\end{equation}
for all $z \in \D^2$. Define the function $\kappa \colon [0, \infty) \ra (0, \log(4)]$ by 
\begin{equation}\label{eq_defn_g}
\kappa(t) = \log(e^t + e^{-t} +2) - t = \log(1 + e^{-2t} + 2e^{-t}).
\end{equation} 
It follows that $\kappa(t) \ra 0$ as $t \ra \infty$. From~\eqref{eq_lattice_est_2} and~\eqref{eq_defn_g}, we find
\begin{equation}\label{eq_lattice_est_3}
\kappa(r(z)) + \log \left( \frac{4\iota(z)}{\beta + \widehat{C}(r(z))} \right) \leq r(z) \leq \log \left( \frac{4\iota(z)}{\beta - \widehat{C}(r(z))} \right) + \kappa(r(z)).
\end{equation}
Since $\widehat{C}(r(z)) \ra 0$ for $r(z) \ra \infty$, we can find a function $\epsilon(t)$, with $\epsilon(t) \ra 0$ as $t \ra \infty$, such that 
\begin{equation}\label{eq_lattice_est_4}
\log \left( \frac{4\iota(z)}{\beta} \right) - \epsilon(r(z)) \leq r(z) \leq \log \left( \frac{4\iota(z)}{\beta} \right)  + \epsilon(r(z)).
\end{equation}
Setting $\tau(g,k) := \log(4(g-1)k)$ yields~\eqref{eq_dist_Brillouin} for $z \in \Int(B_k)$. By continuity, the same estimates hold for $B_k$, since $B_k$ is the closure of $\Int(B_k)$.
\end{proof}

Further, the homeomorphism $\varphi$ acts in a coherent way on the individual geodesics in the web $\LL$, in the following sense.

\begin{lem}\label{lem_lines_to_lines}
Given $\lambda_1 \in \Lambda_1$, there exists a unique $\lambda_2  \in \Lambda_2$ such that $\varphi(L_{\lambda_1}) = L_{\lambda_2}$.
\end{lem}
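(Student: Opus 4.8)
The plan is to show that $\varphi$ must carry each geodesic $L_{\lambda_1}$ of the web $\LL_1$ onto \emph{some} geodesic of the web $\LL_2$, and then identify the target as a bisector $L_{\lambda_2}$. The starting point is the second identity in Lemma~\ref{lem_focal_hyperbolic}, namely $\D^2 = \sigma_1 \cup \LL$: since $\varphi(\sigma_i^1) = \sigma_i^2$ for all $i$, and $\sigma_1 = \D^2 \setminus \LL$ while $\LL = \bigcup_{i \geq 2} \sigma_i$, we get $\varphi(\LL_1) = \LL_2$ as sets. So the content of the lemma is that $\varphi$ respects the decomposition of the web into its \emph{individual} geodesic components, not merely the web as a whole.

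First I would characterize a single geodesic $L_{\lambda_1}$ inside $\LL_1$ in a way that is visible to $\varphi$, i.e. purely in terms of the focal strata $\{\sigma_i^1\}$. The natural move is to use the stratification structure: by the Angle Lemma cited after Definition~\ref{focal_equiv}, on $\LL$ only $\sigma_2$ (the points with $\upsilon(z) = 1$, lying on exactly one bisector) can be one-dimensional, while the higher strata $\sigma_i$, $i \geq 3$, consist of the isolated intersection points of two or more bisectors and hence form a discrete set. Thus $\sigma_2^1$ is the disjoint union of the relatively open geodesic arcs obtained by deleting from $\LL_1$ the (discrete, $\varphi$-invariant) set $\bigcup_{i\geq 3}\sigma_i^1$; its connected components are exactly these open arcs. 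Since $\varphi$ is a homeomorphism preserving $\sigma_2^1 \mapsto \sigma_2^2$ and $\bigcup_{i\geq 3}\sigma_i^1 \mapsto \bigcup_{i\geq 3}\sigma_i^2$, it maps components to components and, crucially, preserves the combinatorial adjacency: two arcs of $\sigma_2^1$ whose closures share a crossing point in $\sigma_i^1$ go to two arcs of $\sigma_2^2$ sharing a crossing point. A full geodesic $L_{\lambda_1}$ is the closure of a maximal chain of such arcs that continues ``straight through'' each crossing. I would pin down ``straight through'' topologically: at a crossing point $z \in \sigma_i^1$ ($i \geq 3$ generic, say $i=3$, i.e. $\upsilon(z)=2$), exactly $2$ bisectors cross, giving $4$ emanating arcs, and the pairing into two geodesics is the unique pairing of the $4$ local arcs into $2$ pairs such that removing the point locally disconnects each pair from... — more robustly, it is detected by the local structure of the \emph{complementary} regions (the Brillouin cells $\Int(B_k)$ of Lemma~\ref{lem_brill_zones}), since each local sector between consecutive arcs is a corner of one $B_k$, and a geodesic through $z$ is the common boundary of two distinct such sectors lying on a line; $\varphi$ preserves the cells and their corner structure. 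This gives $\varphi(L_{\lambda_1}) = \gamma$ for a connected, properly embedded, complete geodesic $\gamma \subset \D^2$ that is a component of $\LL_2$ — hence $\gamma = L_{\lambda_2}$ for some $\lambda_2 \in \Lambda_2$, because by Definition~\ref{bril_plane} every component of $\LL_2$ is exactly such a bisector (distinct $\lambda$ may a priori give the same bisector, but since $\Gamma_2$ is torsion-free and acts freely, $0$ and $\lambda$ determine $L_{\lambda}$ and conversely $\lambda$ is the reflection of $0$ across $L_\lambda$, so $\lambda_2$ is unique). Uniqueness of $\lambda_2$ then follows, and injectivity/surjectivity of $\lambda_1 \mapsto \lambda_2$ follows from applying the same argument to $\varphi^{-1}$.

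\textbf{Main obstacle.} The delicate point is the ``straight through'' step: showing that $\varphi$, which is only a homeomorphism with no a priori relation to the exponential maps or the hyperbolic metric, must send a geodesic line of the web to a \emph{single} geodesic line rather than bending it at a crossing onto a different local branch. The $\sigma_i$-strata alone record only that a crossing has multiplicity $i-1$; the pairing of local branches into lines is extra structure. I expect the resolution to come from Lemma~\ref{lem_brill_zones} together with Lemma~\ref{lem_geo_brill}: the Brillouin cells adjacent along $L_{\lambda_1}$ have Brillouin indices differing by exactly $1$ as one crosses the single geodesic $L_{\lambda_1}$ (crossing $L_\lambda$ toggles whether $\lambda$ lies in the relevant disk), and this ``$\pm 1$ across an edge'' relation, being phrased via $\iota$ and hence via the strata and the cell structure that $\varphi$ preserves, forces $\varphi$ to respect which pairs of cells are ``opposite across a single bisector.'' Making this local index-bookkeeping rigorous at a crossing point — in particular ruling out pathological identifications at the discrete higher strata — is where the real work lies; everything else is the bookkeeping of components and closures.
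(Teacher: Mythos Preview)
Your framing is right: $\varphi(\LL_1)=\LL_2$ is immediate from the strata, and the whole content of the lemma is the ``straight through'' step at a crossing $z\in\sigma_i^1$, $i\ge 3$. You also correctly isolate this as the obstacle.

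Where you diverge from the paper is in how the obstacle is resolved, and your proposed resolution is both more complicated and in danger of circularity. The paper does not touch Brillouin indices here at all. It uses a one-line local invariant: take a small disk about $z$ meeting only the $\upsilon(z)$ geodesics of $\LL_1$ that pass through $z$; among the $2\upsilon(z)$ local arcs, the two arcs belonging to $L_{\lambda_1}$ are characterized by the fact that they split the remaining $2(\upsilon(z)-1)$ arcs into two groups of \emph{equal} size --- and this equal-split property singles out exactly the ``antipodal'' pairings, i.e.\ the geodesic pairings. Since $\varphi$ is an orientation-preserving homeomorphism, it preserves the cyclic order of local arcs around $z$, hence preserves the equal-split pairings; so the two image arcs at $w=\varphi(z)$ lie on a single geodesic of $\LL_2$. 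Chaining through the (discrete) set of crossings gives $\varphi(L_{\lambda_1})=L_{\lambda_2}$, and the same argument applied to $\varphi^{-1}$ gives the reverse inclusion and hence uniqueness.

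Your ``$\pm 1$ across an edge'' idea would need $\varphi$ to preserve $\iota$, i.e.\ to preserve the Brillouin zones. In the paper that statement is Lemma~\ref{lem_zones_to_zones}, and its proof \emph{uses} Lemma~\ref{lem_lines_to_lines}; invoking it here is circular. The lemmas you actually cite, \ref{lem_brill_zones} and \ref{lem_geo_brill}, are statements about a single web and give no $\varphi$-invariant access to $\iota$ on their own. You might be able to salvage the approach by producing a strata-only description of $\iota$ (e.g.\ via minimal crossing numbers of paths from $0$), but you have not done so, and the paper's equal-split argument makes the detour unnecessary.
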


\begin{proof}
Given a point $z \in L_{\lambda_1}$ with $\upsilon(z) = 1$, there exists a unique $L_{\lambda_2}$ that passes through $w = \varphi(z)$. We show that $\varphi(L_{\lambda_1}) \subseteq L_{\lambda_2}$. 
A similar argument shows that $\varphi^{-1}(L_{\lambda_2}) \subseteq L_{\lambda_1}$. Now suppose that $z \in L_{\lambda_1}$ is a point through which $\upsilon(z) \geq 2$ geodesics in the web $\LL_1$ pass, 
take a small Euclidean disk centered at $z$, which does not intersect geodesics other than those that pass through $z$. The geodesic $L_{\lambda_1}$ cuts this disk into two halves, each containing the same number 
of segments of geodesics incident to $z$. Since this is a topological invariant, the curve $\varphi(L_{\lambda_1})$ has to pass through as the same geodesic at $w = \varphi(z)$. As this holds for every such intersection point, the claim follows.
\end{proof}

Every geodesic $L_{\lambda}$, with $\lambda \in \Lambda$, separates the disk $\D^2$ into two connected components $\HH^{\pm}_{\lambda}$, where we denote $\HH^-_{\lambda}$ the component containing $0 \in \D^2$. 
We say $L_{\lambda}$ {\em separates} $0, z \in \D^2$ if $z \in \HH^+_{\lambda}$. The Brillouin zones are natural with respect to our homeomorphism $\varphi$ in the following sense.

\begin{lem}[Naturality of Brillouin zones]\label{lem_zones_to_zones}
We have $\varphi(B^1_k) = B^2_k$, for every $k \in \N$.
\end{lem}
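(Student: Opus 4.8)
The plan is to characterize the Brillouin zones purely in terms of the focal data that $\varphi$ preserves, and then argue by induction on $k$. Recall from Lemma~\ref{lem_brill_zones} that
\[
\Int(B_k) = \{ z \in \D^2 ~|~ \iota(z) = k,~ \upsilon(z) = 0\},
\]
so the issue is that $\varphi$ visibly preserves $\upsilon$ (it sends $\sigma_i^1$ to $\sigma_i^2$, and by Lemma~\ref{lem_focal_hyperbolic} membership in $\sigma_i$ is exactly $\upsilon = i-1$), but it is not a priori clear that $\varphi$ preserves the counting index $\iota$. The key observation is that $\iota$ can be recovered topologically: for $z \in \sigma_1$ (i.e. $\upsilon(z) = 0$), the ray $\rho(z)$ from $0$ to $z$ crosses exactly $\iota(z)$ distinct geodesics of the web $\LL$, and each such crossing is transverse; since $\varphi$ maps $\LL_1$ to $\LL_2$ bijectively on individual geodesics by Lemma~\ref{lem_lines_to_lines}, and is an orientation-preserving homeomorphism, the image $\varphi(\rho(z))$ is an arc from $0$ to $\varphi(z)$ crossing exactly the images of those geodesics. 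So the number of geodesics of $\LL_2$ separating $0$ from $\varphi(z)$ equals the number of geodesics of $\LL_1$ separating $0$ from $z$, provided we know that ``being separated from $0$'' is preserved.

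First I would make precise the separation statement. For each $\lambda_1 \in \Lambda_1$, Lemma~\ref{lem_lines_to_lines} gives $\lambda_2 \in \Lambda_2$ with $\varphi(L_{\lambda_1}) = L_{\lambda_2}$; since $\varphi$ is a homeomorphism of $\D^2$ fixing $0$ and sending $L_{\lambda_1}$ onto $L_{\lambda_2}$, it maps the two complementary components $\HH^\pm_{\lambda_1}$ onto the two components of $\D^2 \setminus L_{\lambda_2}$, and because $\varphi(0) = 0 \in \HH^-_{\lambda_2}$ it must send $\HH^-_{\lambda_1}$ to $\HH^-_{\lambda_2}$ and hence $\HH^+_{\lambda_1}$ to $\HH^+_{\lambda_2}$. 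Therefore $L_{\lambda_1}$ separates $0$ from $z$ if and only if $L_{\lambda_2}$ separates $0$ from $\varphi(z)$. Now for $z$ with $\upsilon(z) = 0$, the set of geodesics meeting the open ray $\rho(z)$ is exactly the set of geodesics separating $0$ from $z$ (a geodesic through $0$ is impossible since $0 \notin \Lambda$ as the groups are torsion-free, and a geodesic meeting $\rho(z)$ but not separating would have to meet it an even number of times, impossible for two geodesics in $\D^2$ which meet at most once). Hence $\iota(z) = \#\{\lambda_1 : L_{\lambda_1} \text{ separates } 0, z\}$, and this is carried bijectively by $\varphi$ to $\#\{\lambda_2 : L_{\lambda_2} \text{ separates } 0, \varphi(z)\} = \iota(\varphi(z))$. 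Combined with $\upsilon(\varphi(z)) = \upsilon(z) = 0$, Lemma~\ref{lem_brill_zones} gives $\varphi(\Int(B_k^1)) = \Int(B_k^2)$ for every $k$.

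Finally I would pass to the closures: by Lemma~\ref{lem_brill_zones}, $B_k = \Cl(\Int(B_k))$, and since $\varphi$ is a homeomorphism of $\D^2$ it commutes with closure, so $\varphi(B_k^1) = \varphi(\Cl(\Int(B_k^1))) = \Cl(\varphi(\Int(B_k^1))) = \Cl(\Int(B_k^2)) = B_k^2$, as desired.

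The step I expect to be the main obstacle is the purely topological identification $\iota(z) = \#\{\lambda : L_\lambda \text{ separates } 0 \text{ from } z\}$ together with its $\varphi$-invariance — specifically, being careful that $\varphi(\rho(z))$, which is only a topological arc and not a geodesic ray, still crosses precisely the images of the geodesics that $\rho(z)$ crosses and no others. This is where one uses that $\varphi$ is a homeomorphism and the separation-preserving property above, rather than any metric control; the remaining algebra with closures is routine. One should also note in passing that for $z$ on the boundary of a Brillouin zone the naturality still follows by taking closures, so it suffices to treat the interiors.
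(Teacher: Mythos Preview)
Your argument is correct and follows essentially the same route as the paper: reinterpret $\iota(z)$ as the number of geodesics in the web separating $0$ from $z$, use Lemma~\ref{lem_lines_to_lines} together with $\varphi(0)=0$ to see that $\varphi$ preserves the half-planes $\HH^\pm_\lambda$ and hence this separation count, conclude $\varphi(\Int(B_k^1))=\Int(B_k^2)$, and pass to closures via $B_k=\Cl(\Int(B_k))$. Your write-up is in fact more careful than the paper's about why crossing $\rho(z)$ is equivalent to separating $0$ and $z$; the mention of induction in your opening plan is unnecessary, since you never use it.
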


\begin{proof}
First, let $z \in \Int(B^1_k)$. By Lemma~\ref{lem_focal_hyperbolic}, we have that
\[ \iota(z) = \# \left\{ \lambda \in \Lambda_1 ~\vert~ L_{\lambda} \cap \rho(z) \neq \emptyset \right\} = k . \]
Every such geodesic $L_{\lambda}$ separates $0$ and $z$ and there are exactly $k$ such geodesics in the web $\LL_1$ relative to $\Lambda_1$.
As $\varphi$ is a homeomorphism for which $\varphi(0) = 0$ and $\varphi(L_{\lambda_1}) = L_{\lambda_2}$, with $\lambda_1 \in \Lambda_1$ and $\lambda_2 \in \Lambda_2$, by Lemma~\ref{lem_lines_to_lines}, 
this information is preserved by $\varphi$. Thus there exist exactly $k$ geodesics in the web $\LL_2$ relative to $\Lambda_2$ that separate $0$ and $w = \varphi(z)$. This shows that $\varphi( \Int(B^1_k) ) = \Int(B^2_k)$, for every $k \in \N$. 
Passing to the closure yields $\varphi(B^1_k) = B^2_k$, for every $k \in \N$.
\end{proof}

From the above, we obtain the following.

\begin{lem}[Radial quasi-isometry]\label{lem_quasi_iso}
There exists a function $C(t)$, such that
\begin{equation}\label{eq_rad_quasi_1}
r(z) - C(r(z)) \leq r( \varphi(z) ) \leq r(z) + C(r(z)),
\end{equation}
for every $z \in \D^2$, with $C(t) \ra 0$ for $t \ra \infty$.
\end{lem}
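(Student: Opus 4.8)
The plan is to combine the naturality of Brillouin zones (Lemma~\ref{lem_zones_to_zones}) with the universal radial location of Brillouin zones (Lemma~\ref{lem_geo_brill}). First I would fix $z \in \D^2$ and let $k = k(z) \in \N$ be the index of a Brillouin zone whose closure contains $z$; since the $B^1_k$ cover $\D^2$ (their interiors are dense and each $B_k$ is the closure of its interior), such a $k$ exists, and we may take it with $z \in B^1_k$. Applying Lemma~\ref{lem_geo_brill} to $M_1 = \D^2\slash\Gamma_1$ gives
\begin{equation*}
\tau(g,k) - \epsilon_1(r(z)) \leq r(z) \leq \tau(g,k) + \epsilon_1(r(z)),
\end{equation*}
where $\tau(g,k) = \log(4(g-1)k)$, $g$ is the common genus (the surfaces are homeomorphic, hence share a genus), and $\epsilon_1(t) \to 0$. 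By Lemma~\ref{lem_zones_to_zones}, $w := \varphi(z) \in \varphi(B^1_k) = B^2_k$, so applying Lemma~\ref{lem_geo_brill} now to $M_2$ yields
\begin{equation*}
\tau(g,k) - \epsilon_2(r(w)) \leq r(w) \leq \tau(g,k) + \epsilon_2(r(w)),
\end{equation*}
with the same $\tau(g,k)$ and $\epsilon_2(t) \to 0$. Subtracting, $|r(z) - r(w)| \leq \epsilon_1(r(z)) + \epsilon_2(r(w))$.

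The remaining task is to convert this into the stated bound $|r(z) - r(\varphi(z))| \leq C(r(z))$ with $C(t) \to 0$, i.e.\ to control $\epsilon_2(r(w))$ in terms of $r(z)$ rather than $r(w)$. The point is a bootstrap: from $|r(z) - r(w)| \leq \epsilon_1(r(z)) + \epsilon_2(r(w))$ and the fact that both $\epsilon_i$ are decreasing and bounded (say $\epsilon_i \le \epsilon_i(0) =: E$), we first get the crude a~priori bound $r(w) \geq r(z) - 2E$, so that $\epsilon_2(r(w)) \leq \epsilon_2(\max\{0,\, r(z) - 2E\})$. Substituting back,
\begin{equation*}
|r(z) - r(w)| \;\leq\; \epsilon_1(r(z)) + \epsilon_2\big(\max\{0,\,r(z)-2E\}\big) \;=:\; C(r(z)),
\end{equation*}
and since $\epsilon_1(t)\to 0$ and $\epsilon_2(\max\{0,t-2E\}) \to 0$ as $t\to\infty$, we have $C(t)\to 0$. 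One should also note $k$ is well-defined enough for the argument: even if $z$ lies on the common boundary of several $B^1_k$, each such $k$ gives a valid pair of inequalities, and we simply pick one; the final bound does not depend on the choice.

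The main obstacle is precisely this dependence-on-$r(w)$ issue in the error term — the naive application of the two instances of Lemma~\ref{lem_geo_brill} gives an inequality mixing $r(z)$ and $r(w)$, and one needs the (easy but essential) observation that $r(w)$ cannot be much \emph{smaller} than $r(z)$ in order to replace $\epsilon_2(r(w))$ by something depending only on $r(z)$; the symmetric argument applied to $\varphi^{-1}$ is not needed once monotonicity of $\epsilon_2$ is exploited. A secondary point requiring a line of justification is that $M_1$ and $M_2$ have the same genus, which follows since a focal equivalence restricts to a homeomorphism $\psi$ of the zero sections, hence of the surfaces themselves.
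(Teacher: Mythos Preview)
Your proposal is correct and follows essentially the same route as the paper: pick $k$ with $z\in B^1_k$, use Lemma~\ref{lem_zones_to_zones} to get $\varphi(z)\in B^2_k$, apply Lemma~\ref{lem_geo_brill} on both sides with the common genus, and then bootstrap the mixed bound $|r(z)-r(w)|\le \epsilon_1(r(z))+\epsilon_2(r(w))$ via a uniform a~priori lower bound $r(w)\ge r(z)-r_0$ coming from boundedness of the $\epsilon_i$. The paper's proof is the same argument, only it writes the constant as an unspecified $r_0$ where you take $2E$, and it omits your $\max\{0,\cdot\}$ safeguard and the remark on the choice of $k$.
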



\begin{proof}
Given $z \in \D^2$, $z \in B^1_k$ for some $k \geq 1$. By Lemma~\ref{lem_zones_to_zones}, we have that $w := \varphi(z) \in B^2_k \subset \D^2$. Since the genus of $M_1$ and $M_2$ is equal, by Lemma~\ref{lem_geo_brill}, we have that 
\begin{equation}\label{eq_est_rad_1}
\tau(g,k) - \epsilon_1(r(z)) \leq r(z) \leq \tau(g,k) + \epsilon_1(r(z))
\end{equation}
and 
\begin{equation}\label{eq_est_rad_2}
\tau(g,k) - \epsilon_2(r(w)) \leq r(w) \leq \tau(g,k) + \epsilon_2(r(w)),
\end{equation}
with $\tau(g,k) = \log(4(g-1)k)$ and where $\epsilon_1(t), \epsilon_2(t) \ra 0$ as $t \ra \infty$. In particular, $\epsilon_1(t)$ and $\epsilon_2(t)$ are bounded for all $t \in [0, \infty)$. 
Combining~\eqref{eq_est_rad_1} and~\eqref{eq_est_rad_2}, there exists a constant $r_0 > 0$ such that $r(w) \geq r(z) - r_0$, and we have 
\begin{equation}\label{eq_rad_quasi_2}
| r(w) - r(z) | \leq \epsilon_1(r(z)) + \epsilon_2(r(w)) \leq \epsilon_1(r(z)) + \epsilon_2(r(z) - r_0) := C(r(z)).
\end{equation}
Thus~\eqref{eq_rad_quasi_1} follows from~\eqref{eq_rad_quasi_2} and $C(t) \ra 0$ as $t \ra \infty$, since $\epsilon_1(t), \epsilon_2(t) \ra 0$ as $t \ra \infty$.
\end{proof}

\subsection{The induced mapping at infinity}

Given an interval $I \in \SSS^1$, we denote $|I|$ the length of the interval, relative to the standard Euclidean measure on $\SSS^1$. Given $\lambda \in \Lambda$, we denote $I_{\lambda} \subset \SSS^1$ the shortest closed interval whose endpoints correspond to the endpoints of $L_{\lambda}$ on $\SSS^1$. 

Given a geodesic $L_{\lambda} \subset \LL$, denote $\delta(\lambda) = d(0, L_{\lambda})$. We denote $\II := \{ I_{\lambda} \}_{\lambda \in \Lambda}$ the collection of these closed intervals associated to the web $\LL$. We first collect combinatorial information about the collection of intervals $\II$.

\begin{lem}\label{properties_I}
The collection of intervals $\II$ satisfies the following conditions.
\begin{enumerate}
\item[\tu{(a)}] Every point $x \in \SSS^1$ is the limit point of an infinite nested sequence of intervals in the collection $\II$.
\item[\tu{(b)}] Given an interval $J \subset \SSS^1$ and given $\epsilon >0$, there exists a finite covering of $J$ by intervals in $\II$ of length at most $\epsilon$.
\end{enumerate}
\end{lem}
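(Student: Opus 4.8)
The plan is to deduce both statements from the local finiteness of the web $\LL$ (Remark~\ref{rem_loc_finite}) together with the fact that $\delta(\lambda) = d(0,L_\lambda)\to\infty$ as $\lambda$ ranges over $\Lambda$ in order of increasing distance, so that the Euclidean diameter of $I_\lambda$ tends to $0$; and from the density of the Brillouin zones in $\D^2$ (Lemma~\ref{lem_brill_zones}), which forces the web to accumulate on every boundary point.

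For part (a): fix $x\in\SSS^1$ and consider the geodesic ray $\rho$ from $0$ to $x$. Since $\iota(z)\to\infty$ along this ray by Lemma~\ref{lem_lattice_count}, the ray crosses infinitely many distinct geodesics $L_{\lambda_n}\in\LL$, which I order so that $L_{\lambda_n}\cap\rho$ recedes to $x$. Each such $L_{\lambda_n}$ separates $0$ from a terminal subray of $\rho$, hence $x$ lies in the closed arc $\SSS^1\setminus\Int(I'_{\lambda_n})$ on the far side — more precisely, the endpoint-arc $I_{\lambda_n}$ (the \emph{short} arc cut off by $L_{\lambda_n}$) contains $x$, because $L_{\lambda_n}$ separates $0$ from $x$ and $\delta(\lambda_n)\to\infty$ makes that arc the short one for all large $n$. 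Passing to a subsequence along which the $I_{\lambda_n}$ are nested (possible since $|I_{\lambda_n}|\to 0$ and they all contain $x$) gives the desired infinite nested sequence with limit point $x$. The key geometric input here is: a geodesic at distance $\delta$ from $0$ subtends an endpoint-arc whose Euclidean length is $O(e^{-\delta})$, hence $\to 0$, and it separates $0$ from exactly the points of $\SSS^1$ in that arc.

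For part (b): given $J\subset\SSS^1$ and $\eps>0$, by part (a) every $x\in J$ is contained in some $I_{\lambda(x)}\in\II$ with $|I_{\lambda(x)}|<\eps$; taking such an interval for each $x$ and, if necessary, shrinking to an open interval about $x$, we obtain an open cover of the compact set $\overline{J}$, and extract a finite subcover. It remains to replace the open intervals by the closed members of $\II$ they came from, which still have length $<\eps$ and still cover $J$; discarding redundant ones gives a finite subcollection of $\II$ covering $J$ by intervals of length at most $\eps$.

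The main obstacle I anticipate is the bookkeeping in part (a): making sure one can genuinely \emph{nest} the chosen intervals rather than merely having infinitely many small ones containing $x$. This requires using that the geodesics met by $\rho$ appear at strictly increasing distances $\delta(\lambda_n)\to\infty$ (local finiteness guarantees only finitely many below each bound, so such an enumeration exists), together with the elementary planar fact that two endpoint-arcs $I_{\lambda}, I_{\mu}$ both containing $x$ and both short must be comparable (one contains the other) unless their defining geodesics cross very close to $0$ — which is excluded once $\delta$ is large. Pinning down this comparability lemma cleanly, so that a nested subsequence can be extracted, is the one spot that needs care; everything else is a routine compactness argument.
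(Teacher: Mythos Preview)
Your argument for part~(a) is essentially the paper's: both use $\iota(z)\to\infty$ along the ray toward $x$ (Lemma~\ref{lem_lattice_count}) to produce infinitely many $I_{\lambda_n}$ containing $x$, observe that local finiteness forces $|I_{\lambda_n}|\to 0$, and then pass to a nested subsequence. Your worry about the nesting step is overblown: since each $L_{\lambda_n}$ meets the ray $\rho$ transversely in the interior of $\D^2$, the point $x$ lies in the \emph{interior} of each $I_{\lambda_n}$; once you know that, choosing $I_{\lambda_{n_{k+1}}}$ shorter than the distance from $x$ to the nearer endpoint of $I_{\lambda_{n_k}}$ forces $I_{\lambda_{n_{k+1}}}\subset I_{\lambda_{n_k}}$. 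No comparability lemma about crossing geodesics is needed.

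For part~(b) your route is correct but genuinely different from the paper's. You invoke part~(a) to place each $x\in\overline{J}$ in the interior of some $I_{\lambda(x)}$ of length $<\eps$, then extract a finite subcover by compactness of $\overline{J}$. The paper instead argues globally: it removes the finitely many geodesics $L_\lambda$ with $|I_\lambda|>\eps$ (finitely many by the null-sequence observation), shows that the component of the complement of the remaining web containing $0$ is a compact geodesic polygon (via the same distance-as-function-of-angle argument used for $B_k$ in Lemma~\ref{lem_brill_zones}), and takes the intervals corresponding to its finitely many edges as a cover of all of $\SSS^1$. Your approach is shorter and more elementary; the paper's has the minor advantage of producing a cover of the whole circle with an explicit geometric description (edges of a Dirichlet-type polygon), though that extra structure is not used later.
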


\begin{proof}
To prove (a), we first observe that the lengths of the intervals in $\II$ form a null-sequence; that is, for any given $\epsilon > 0$, there are only finitely many intervals in $\II$ whose lengths exceed $\epsilon$. Indeed, if there would be infinitely 
many intervals in $\II$ whose length is bounded from below, then the distance from the origin of the geodesics in the web $\LL$ that correspond to this infinite collection of intervals is uniformly bounded from above. 
However, this contradicts that the web $\LL$ is locally finite. As $r(z) \ra \infty$, we have that $\iota(z) \ra \infty$. By Lemma~\ref{lem_brill_zones}, the number of elements in $\Lambda$ for which $L_{\lambda} \cap \rho(z) \neq \emptyset$ 
increases as $r(z) \ra \infty$. Thus, given $x \in \mathbb{S}^1$, we can find infinitely many intervals that cover $x \in \mathbb{S}^1$. Passing to a subsequence if necessary to guarantee nesting, this proves claim (a).

To prove (b), let $J \subset \SSS^1$ be any given interval and $\epsilon>0$. By deleting finitely many geodesics $L_{\lambda}$ from $\D^2$, and corresponding intervals $I_{\lambda}$ from $\mathbb{S}^1$, according to their increasing distance $\delta(\lambda)$, 
we may assume that all intervals in the remaining collection are of length at most $\epsilon$. We obtain a convex hull defined as the connected component, containing the origin $0 \in \D^2$, of the complement of $\LL$ minus the finitely many geodesics just deleted.
To finish the proof, we argue as in Lemma~\ref{lem_brill_zones}. The convex hull consists of finitely many edges. Indeed, for every ray emanating from the origin, define the function that associates the distance to this convex hull as a function of the angle of the ray with respect to the positive real axis. By (a), this function associates a definite real number to every angle and is continuous. By compactness of the circle, continuity yields a finite maximum over all angles. Moreover, the convex hull is comprised of finitely many edges since the web of geodesics $\LL$ is locally finite.
The edges of this convex hull can be continued to complete geodesics contained in $\LL$. The intervals in $\SSS^1$ corresponding to this finite collection of geodesics, by construction, gives a finite covering of $\SSS^1$ by intervals in $\II$ whose lengths are at most $\epsilon$. In particular, it gives a finite covering of $J \subset \SSS^1$ by such intervals.
\end{proof}

\begin{lem}\label{lem_extend}
The homeomorphism $\varphi \colon \D^2 \ra \D^2$ extends to a homeomorphism $f \colon \mathbb{S}^1 \ra \mathbb{S}^1$ of the boundary $\mathbb{S}^1 = \partial \D^2$.
\end{lem}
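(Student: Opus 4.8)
The plan is to use the combinatorics of the interval collection $\II$ from Lemma~\ref{properties_I} together with the coherence of $\varphi$ on the web $\LL$ from Lemma~\ref{lem_lines_to_lines} to transport the boundary structure. First I would recall that by Lemma~\ref{lem_lines_to_lines} the homeomorphism $\varphi$ induces a bijection $\lambda_1 \mapsto \lambda_2$ between $\Lambda_1$ and $\Lambda_2$ with $\varphi(L_{\lambda_1}) = L_{\lambda_2}$; hence $\varphi$ induces a bijection of the interval collections $\II_1 \to \II_2$, sending $I_{\lambda_1}$ to $I_{\lambda_2}$. The key point is that this bijection preserves the \emph{nesting} (inclusion) and \emph{disjointness} relations among intervals: if $I_{\lambda_1} \subset I_{\mu_1}$, then the geodesic $L_{\mu_1}$ separates $0$ from both endpoint-sides of $L_{\lambda_1}$, equivalently $\HH^+_{\lambda_1} \subset \HH^+_{\mu_1}$; since $\varphi$ fixes $0$, maps $\D^2$ to itself homeomorphically and carries $L_{\lambda_1}, L_{\mu_1}$ to $L_{\lambda_2}, L_{\mu_2}$, it carries the component $\HH^+_{\lambda_1}$ to $\HH^+_{\lambda_2}$ and likewise for $\mu$, so $\HH^+_{\lambda_2} \subset \HH^+_{\mu_2}$, i.e. $I_{\lambda_2} \subset I_{\mu_2}$. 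The same argument gives preservation of disjointness and of the "linked/crossing" relation; in particular the cyclic order of disjoint intervals on $\SSS^1$ is preserved.

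Next I would define the candidate boundary map $f \colon \SSS^1 \to \SSS^1$ pointwise using part (a) of Lemma~\ref{properties_I}. Given $x \in \SSS^1$, choose an infinite nested sequence $I_{\lambda_1^{(1)}} \supset I_{\lambda_1^{(2)}} \supset \cdots$ of intervals in $\II_1$ shrinking to $x$ (their lengths tend to $0$ since lengths of intervals in $\II$ form a null sequence, as shown inside the proof of Lemma~\ref{properties_I}). Apply the induced bijection to obtain a nested sequence $I_{\lambda_2^{(1)}} \supset I_{\lambda_2^{(2)}} \supset \cdots$ in $\II_2$; by the null-sequence property for $\II_2$ their lengths also tend to $0$, so their intersection is a single point, which I define to be $f(x)$. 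I must check this is well-defined, i.e. independent of the chosen nesting sequence: two such sequences for the same $x$ are mutually cofinal-interleaved (each interval of one eventually contains an interval of the other, since both shrink to $x$ and the intervals are genuine sub-arcs), and nesting is preserved by the bijection, so the images have the same limit point. The same construction applied to $\varphi^{-1}$ yields a map $g \colon \SSS^1 \to \SSS^1$, and $g \circ f = \Id$, $f \circ g = \Id$ by the bijectivity of the induced correspondence, so $f$ is a bijection.

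Then I would verify $f$ is continuous (hence a homeomorphism of the compact space $\SSS^1$): given $x$ and $\veps > 0$, pick $I_{\lambda_1} \in \II_1$ containing $x$ in its interior with $|I_{\lambda_2}| < \veps$ for the corresponding interval — possible by part (b) of Lemma~\ref{properties_I} applied on the $\II_2$ side, or directly by refining the nesting sequence; then any $y$ in the interior of $I_{\lambda_1}$ has $f(y) \in I_{\lambda_2}$, giving $|f(x) - f(y)| < \veps$. Finally I would confirm that $f$ genuinely extends $\varphi$: for $z \in L_{\lambda_1}$ the image $\varphi(z) \in L_{\lambda_2}$, and as $z$ runs out to an endpoint of $L_{\lambda_1}$ on $\SSS^1$ along the geodesic, the intervals of the subweb cut off shrink to that endpoint and are carried coherently, so $f$ agrees with the boundary values of $\varphi$ along each $L_{\lambda_1}$, and since $\bigcup_\lambda L_{\lambda_1}$ accumulates on all of $\SSS^1$ this pins down the extension; more carefully, one shows $\varphi$ together with $f$ is continuous on $\D^2 \cup \SSS^1$ by the same shrinking-neighbourhood argument, using that a basis of neighbourhoods of $x \in \SSS^1$ in $\overline{\D^2}$ is given by the regions cut off by geodesics $L_{\lambda_1}$ with $I_{\lambda_1} \ni x$.

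The main obstacle I anticipate is the well-definedness and continuity bookkeeping: one must be careful that an interval $I_\lambda$ is the \emph{shorter} arc between the two endpoints of $L_\lambda$, so the nested-sequence argument only makes sense once the intervals are short, and one must rule out degenerate situations (e.g. an interval whose complement also shrinks, or a point $x$ that is an endpoint of infinitely many $L_\lambda$). Part (a) of Lemma~\ref{properties_I} handles existence of the nesting, but extracting a genuinely \emph{shrinking} nested sequence and showing two such sequences are interleaved requires the null-sequence property and a short topological argument about sub-arcs of $\SSS^1$ ordered by inclusion; that is the crux, and everything else is a routine compactness-and-continuity wrap-up.
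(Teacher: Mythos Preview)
Your proposal is correct and follows essentially the same route as the paper: induce a bijection $\II_1\to\II_2$ from Lemma~\ref{lem_lines_to_lines}, observe that nesting of intervals corresponds to nesting of the half-planes $\HH^+_\lambda$ and is therefore preserved by $\varphi$, define $f$ pointwise via the nested sequences supplied by Lemma~\ref{properties_I}(a), and check continuity by pulling back a small interval of $\II_2$. You are in fact more careful than the paper on two points---you verify well-definedness by the interleaving argument, and you go on to check that $\varphi\cup f$ is continuous on the closed disk, which the paper's proof does not address (it only constructs $f$ on $\SSS^1$).
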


\begin{proof}
Given $\lambda_1 \in \Lambda_1$, let $\lambda_2 \in \Lambda_2$ defined by $\varphi(L_{\lambda_1}) = L_{\lambda_2}$. 
As $\varphi(0) = 0$, we have that $\varphi(\HH^+_{\lambda_1}) = \HH^+_{\lambda_2}$, thus it follows that $f(I_{\lambda_1}) = I_{\lambda_2}$,
and the endpoints of the interval $I_{\lambda_1} \in \II_1$ are sent to the endpoints of $I_{\lambda_2} \in \II_2$.

Given a point $x \in \mathbb{S}^1$, by Lemma~\ref{properties_I} (i), $\{ x \} = \bigcap_k I_{\lambda^k_1}$ for some subsequence $(\lambda^k_1)_{k \in \N}$ of nested intervals decreasing in length to zero. As this nesting is preserved by $f$, due to the nesting property 
of the corresponding half-planes $\HH^+_{\lambda^k_1}$ and $\HH^+_{\lambda^k_2}$ preserved by $\varphi$ in $\D^2$, we have that $\{ y \} := \bigcap_k I_{\lambda^k_2}$ is a unique point in the image and $f(x) = y$. Therefore, $f$ is one-to-one. 
To prove continuity of $f$, given $x \in \mathbb{S}$ and $y=f(x)$ and an interval $\widehat{J} \subset \mathbb{S}^1$ containing $y$, again by Lemma~\ref{properties_I} (i), we can find an interval $I_{\lambda_2} \subset \widehat{J}$ containing $y$. Consequently, 
$J \subset I_{\lambda_1}$, where $I_{\lambda_1} = f^{-1}(I_{\lambda_2})$ containing $x$, has the property that $f(J) \subset I_{\lambda_2} \subset \widehat{J}$. Thus $f$ is a homeomorphism.
\end{proof}

\begin{lem}\label{lem_unif_quotient}
There exist uniform constants $1 \leq K(t) \leq K_0$ for all $t \in [0, \infty)$, such that if $L_{\lambda_2} = \varphi(L_{\lambda_1})$, then 
\begin{equation}\label{eq_quotient_intervals}
\frac{1}{K(\delta(\lambda_1))} \leq \frac{| I_{\lambda_2} | }{| I_{\lambda_1} | } \leq K(\delta(\lambda_1)),
\end{equation}
where $K(t) \ra 1$ for $t \ra \infty$.
\end{lem}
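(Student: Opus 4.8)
The plan is to reduce the ratio $|I_{\lambda_2}|/|I_{\lambda_1}|$ to the single scalar $\delta(\lambda)=d(0,L_\lambda)$, to show that $\varphi$ perturbs this scalar only \emph{additively} by an amount that vanishes at infinity, and then to exploit that the map $\delta\mapsto|I_\lambda|$ is uniformly log-Lipschitz on $[0,\infty)$, so that an additive error of size $\to0$ in $\delta$ becomes a multiplicative error with constant $\to1$ in $|I_\lambda|$.

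First I would record the exact shape of $|I_\lambda|$. By the rotational symmetry of $\D^2$ about $0$ it suffices to look at the geodesic symmetric about the positive real axis with endpoints $e^{\pm i\phi}\in\SSS^1$; an elementary computation with circles orthogonal to $\SSS^1$ (the orthogonal circle through $e^{\pm i\phi}$ has Euclidean centre $1/\cos\phi$ and radius $\tan\phi$, hence its point closest to $0$ lies at Euclidean distance $(1-\sin\phi)/\cos\phi$) gives $e^{\delta(\lambda)}=\cot(\phi/2)$, whence
\begin{equation*}
|I_\lambda| = h(\delta(\lambda)), \qquad h(\delta):= 4\arctan\!\left(e^{-\delta}\right),
\end{equation*}
and $h\colon[0,\infty)\to(0,\pi]$ is smooth, strictly decreasing, with $h(\delta)\to0$ as $\delta\to\infty$.

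Next I would show that $\delta$ is nearly $\varphi$-invariant. If $L_{\lambda_2}=\varphi(L_{\lambda_1})$ (Lemma~\ref{lem_lines_to_lines}), then $\varphi$ restricts to a homeomorphism $L_{\lambda_1}\to L_{\lambda_2}$, so $\delta(\lambda_2)=\inf_{z\in L_{\lambda_1}}r(\varphi(z))$. Evaluating at the foot $z^*$ of the perpendicular from $0$ to $L_{\lambda_1}$ (where $r(z^*)=\delta(\lambda_1)$), Lemma~\ref{lem_quasi_iso} gives $\delta(\lambda_2)\le r(\varphi(z^*))\le\delta(\lambda_1)+C(\delta(\lambda_1))$; and since $r(z)\ge\delta(\lambda_1)$ for every $z\in L_{\lambda_1}$ and the function $C$ of Lemma~\ref{lem_quasi_iso} may be taken decreasing (it is a sum of the decreasing functions supplied by Lemma~\ref{lem_geo_brill}), one has $r(\varphi(z))\ge r(z)-C(r(z))\ge\delta(\lambda_1)-C(\delta(\lambda_1))$, so that $\delta(\lambda_2)\ge\delta(\lambda_1)-C(\delta(\lambda_1))$. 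Hence $|\delta(\lambda_2)-\delta(\lambda_1)|\le C(\delta(\lambda_1))$.

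Finally I would combine these. Since $|\log h(\delta(\lambda_2))-\log h(\delta(\lambda_1))|\le\big(\sup_{s\ge0}|h'(s)/h(s)|\big)\,|\delta(\lambda_2)-\delta(\lambda_1)|$, it remains to bound the logarithmic derivative of $h$. A direct differentiation gives $|h'(s)/h(s)|=e^{-s}\big/\big((1+e^{-2s})\arctan(e^{-s})\big)$, and the elementary inequality $\arctan t\ge t/(1+t^2)$ for $t\ge0$ (compare derivatives, with equality at $0$) shows $|h'(s)/h(s)|\le1$ for all $s\ge0$. Therefore $\big|\log(|I_{\lambda_2}|/|I_{\lambda_1}|)\big|\le C(\delta(\lambda_1))$, and setting $K(t):=e^{\,|C(t)|}$ yields \eqref{eq_quotient_intervals} with $1\le K(t)\le K_0:=e^{\sup|C|}<\infty$ and $K(t)\to1$ as $t\to\infty$, because $C(t)\to0$. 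The delicate point is exactly this last uniform bound on $(\log h)'$: a priori, controlling $\delta$ only up to an error that \emph{also} vanishes at infinity need not control the ratio of the (themselves vanishing) lengths $|I_\lambda|$; what rescues the argument is that $h(\delta)$ decays at precisely the rate $e^{-\delta}$, so $\log h(\delta)=-\delta+\OO(e^{-2\delta})$ has bounded derivative (indeed tending to $1$), and near $\delta=0$ one simply absorbs the bounded behaviour of $h$ into the uniform constant $K_0$.
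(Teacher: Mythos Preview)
Your argument is correct and follows the same overall strategy as the paper: bound $|\delta(\lambda_2)-\delta(\lambda_1)|$ via the radial quasi-isometry Lemma~\ref{lem_quasi_iso}, then convert this additive control into multiplicative control of $|I_{\lambda_2}|/|I_{\lambda_1}|$. Your execution is in fact tighter than the paper's in two places. First, the paper simply invokes Lemma~\ref{lem_quasi_iso} to assert $|\delta(\lambda_2)-\delta(\lambda_1)|\le C(\delta(\lambda_1))$, but that lemma compares $r(z)$ with $r(\varphi(z))$ for individual points, and $\varphi$ need not carry the foot of the perpendicular from $0$ to $L_{\lambda_1}$ to the foot of the perpendicular to $L_{\lambda_2}$; your infimum argument over $z\in L_{\lambda_1}$, combined with the monotonicity of $C$, is exactly what is needed to bridge this. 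Second, the paper estimates $|I_\lambda|$ asymptotically via $|I_\lambda|\approx 2(1-\tanh(\delta/2))$ and argues with $\asymp$ as $\delta\to\infty$, whereas your exact identity $|I_\lambda|=4\arctan(e^{-\delta})$ together with the global bound $|(\log h)'|\le 1$ yields a single mean-value estimate valid for all $\delta\ge 0$, with no separate treatment of small $\delta$ needed.
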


\begin{proof}
If we are given $L_{\lambda_1}$ and $L_{\lambda_2} = \varphi(L_{\lambda_1})$, then by Lemma~\ref{lem_quasi_iso} we have that 
\begin{equation*}
\delta(\lambda_2) = \delta(\lambda_1) + \zeta(\lambda_1),
\end{equation*}
with $| \zeta(\lambda_1) | \leq C(\delta(\lambda_1))$. The Euclidean distance $\bar{\delta}(\lambda)$ of $0 \in \D^2$ to $L_{\lambda}$ relates to the hyperbolic distance $\delta(\lambda)$ as 
\[ \bar{\delta}(\lambda) = \tanh(\delta(\lambda)/2).\] 
The length $|I_{\lambda}|$ of the interval $I_{\lambda}$ is estimated as follows. We observe that for $\delta(\lambda) \ra \infty$, the geodesic $L_{\lambda}$ converges, when rescaling to unit size, to a semicircle whose distance from the origin is given by $\delta(\lambda)$.
Since the Euclidean radius of the semicircle is $1- \bar{\delta}(\lambda)$, there exists a function $\eta(t)$, with $\eta(t) \ra 0$ as $t \ra 0$, such that
\begin{equation*}
| I_{\lambda} | = 2(1- \bar{\delta}(\lambda))( 1 + \eta(|I_{\lambda} |)),
\end{equation*}
and thus
\begin{equation}\label{eq_semi_circle}
| I_{\lambda} | = 2 \left( 1 - \tanh(\delta(\lambda)/2) \right) ( 1+ \eta(| I_{\lambda} |)).
\end{equation} 
Denoting $\delta_1 = \delta(\lambda_1)$, 
$\delta_2 = \delta(\lambda_2)$ and $\zeta_1 = \zeta(\lambda_1)$ for brevity, combining~\eqref{eq_semi_circle} with the estimate
\begin{equation*}
\frac{1+ \eta(| I_{\lambda_2} |)}{1+ \eta(| I_{\lambda_1} |)} \ra 1,
\end{equation*}
as $| I_{\lambda_1} |, | I_{\lambda_2} | \ra 0$, we have that 
\begin{equation}
\frac{| I_{\lambda_2} | }{| I_{\lambda_1} | } \asymp \frac{ 1 - \tanh(\delta_2/2)} {1 - \tanh(\delta_1/2)} = \frac{1 + e^{\delta_1} }{1+ e^{\delta_2} } = \frac{1+e^{\delta_1}}{1+ e^{\delta_1 + \zeta_1}} \asymp \frac{e^{\delta_1}}{e^{\delta_1 + \zeta_1}} = e^{- \zeta_1} \ra 1,
\end{equation}
since $\delta_1 \ra \infty$ and, simultaneously, $|\zeta_1| \leq C(\delta_1) \ra 0$ as $\delta_1 \ra \infty$. Thus~\eqref{eq_quotient_intervals} follows.
\end{proof}

This implies that the boundary homeomorphism $\varphi$ acts trivially, in the following sense.


\begin{lem}\label{lem_circle_homeo}
The circle homeomorphism $f \colon \mathbb{S}^1 \ra \mathbb{S}^1$ induced by $\varphi$ is a rotation.
\end{lem}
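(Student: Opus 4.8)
The plan is to show that $f$ preserves the standard Euclidean length measure on $\mathbb{S}^1$ up to a constant multiple; since an orientation-preserving homeomorphism of the circle that scales Lebesgue measure by a constant must be a rotation, this will finish the proof. The mechanism for controlling how $f$ distorts length is Lemma~\ref{lem_unif_quotient}: for each geodesic $L_{\lambda_1}$ with image $L_{\lambda_2} = \varphi(L_{\lambda_1})$, the ratio $|I_{\lambda_2}|/|I_{\lambda_1}|$ is pinched between $1/K(\delta(\lambda_1))$ and $K(\delta(\lambda_1))$ with $K(t)\to 1$ as $t\to\infty$. The key point is that the intervals $I_\lambda$ of small length are exactly those coming from geodesics $L_\lambda$ far from the origin, i.e.\ with $\delta(\lambda)$ large, so the distortion factor on small intervals is uniformly close to $1$.

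Concretely, I would first fix an arbitrary arc $J \subset \mathbb{S}^1$ and an $\epsilon > 0$, and invoke Lemma~\ref{properties_I}(b) to cover $J$ by a finite family of intervals $I_{\lambda_1^{(1)}}, \dots, I_{\lambda_1^{(m)}} \in \II_1$ each of length at most $\epsilon$. Because the interval lengths form a null-sequence (as shown in the proof of Lemma~\ref{properties_I}), requiring $|I_{\lambda_1^{(j)}}| \le \epsilon$ forces $\delta(\lambda_1^{(j)}) \ge \Delta(\epsilon)$ for some $\Delta(\epsilon) \to \infty$ as $\epsilon \to 0$; consequently each distortion factor satisfies $K(\delta(\lambda_1^{(j)})) \le K(\Delta(\epsilon)) =: \widehat K(\epsilon)$ with $\widehat K(\epsilon) \to 1$. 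Applying $f$ and using Lemma~\ref{lem_unif_quotient} on each piece, I get that $f(J)$ is covered by the images $I_{\lambda_2^{(j)}}$ with $\sum_j |I_{\lambda_2^{(j)}}| \le \widehat K(\epsilon)\sum_j |I_{\lambda_1^{(j)}}|$; refining the cover so that the overlaps contribute negligibly (or passing to a subcover and controlling multiplicity, using that these are intervals on a circle so bounded overlap is automatic), I can let $\epsilon \to 0$ along a sequence of finer and finer covers of $J$ and conclude $|f(J)| \le |J|$. The same argument applied to $f^{-1}$, whose distortion constants also tend to $1$ by the symmetric form of Lemma~\ref{lem_unif_quotient} (with the roles of $\Lambda_1,\Lambda_2$ interchanged), gives $|J| \le |f(J)|$, hence $|f(J)| = |J|$ for every arc $J$.

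Since $f$ is an orientation-preserving homeomorphism of $\mathbb{S}^1$ with $|f(J)| = |J|$ for all arcs $J$, it preserves Lebesgue measure exactly, so it is an isometry of $(\mathbb{S}^1,|\cdot|)$; being orientation-preserving, it is a rotation. The main obstacle I anticipate is the bookkeeping in the covering argument: one must ensure that the finite covers from Lemma~\ref{properties_I}(b) can be taken with controlled overlap so that the sum of lengths of the covering intervals genuinely approximates $|J|$ from above (rather than overcounting by a fixed factor), and that the error terms $\eta(|I_\lambda|)$ hidden inside the $\asymp$ in Lemma~\ref{lem_unif_quotient} are absorbed uniformly; handling this cleanly may require choosing the covers to be essentially disjoint, using that the edges of the convex hull constructed in the proof of Lemma~\ref{properties_I}(b) meet only at vertices, so their associated intervals $I_\lambda$ have pairwise disjoint interiors.
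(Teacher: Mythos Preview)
Your strategy---using the asymptotic ratio $|I_{\lambda_2}|/|I_{\lambda_1}|\to 1$ from Lemma~\ref{lem_unif_quotient} to show $f$ preserves arc length---is sound in spirit, but the covering argument as written only yields a bi-Lipschitz bound, not an isometry. If you cover $J$ by intervals $I_{\lambda^{(j)}}$ of length at most $\epsilon$, then even after passing to a minimal subcover the multiplicity at each point is at most~$2$ (this is the odd/even observation), so $\sum_j|I_{\lambda^{(j)}}|$ is controlled only by roughly $2|J|$, not by $|J|$; letting $\epsilon\to 0$ then gives merely $|f(J)|\le 2|J|$. Your proposed fix---that the intervals associated to the edges of the convex hull in the proof of Lemma~\ref{properties_I}(b) have pairwise disjoint interiors---is false: adjacent edges of the hull lie on geodesics $L_{\lambda},L_{\lambda'}$ that cross at the common vertex, and their boundary arcs $I_\lambda,I_{\lambda'}$ necessarily overlap (the \emph{radial projections} of the edges from $0$ are disjoint, but those arcs are strictly smaller than the $I_\lambda$, and you have no control over how $f$ acts on them).

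There are two natural repairs. The first is the route the paper takes: use the covering argument only to establish that $f$ is bi-Lipschitz (with constant $4K_0$, say), hence differentiable almost everywhere and absolutely continuous; then invoke Lemma~\ref{properties_I}(a) to produce, at each $x\in\SSS^1$, a nested sequence $I_{\lambda_k}\ni x$ shrinking to $x$, and apply Lemma~\ref{lem_unif_quotient} to conclude $Df(x)=1$ wherever the derivative exists, so integration gives $|f(J)|=|J|$. The second repair is closer to your outline but reverses the inequality: replace the covering from above by a packing from below. By Lemma~\ref{properties_I}(a) the family $\II_1$ is a Vitali cover of $\SSS^1$, so the Vitali covering theorem yields a finite \emph{disjoint} subfamily $I_{\lambda^{(1)}},\dots,I_{\lambda^{(m)}}\subset J$, each of length at most $\epsilon$, with $\sum_j|I_{\lambda^{(j)}}|\ge|J|-\epsilon'$; since the images $f(I_{\lambda^{(j)}})$ remain disjoint inside $f(J)$, one obtains $|f(J)|\ge \widehat K(\epsilon)^{-1}(|J|-\epsilon')$, and letting $\epsilon,\epsilon'\to 0$ gives $|f(J)|\ge|J|$, with the reverse inequality by symmetry.
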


\begin{proof}
We first observe it is sufficient to show that the homeomorphism $f$ is Lipschitz for some Lipschitz constant $1 \leq K_f < \infty$. Suppose that this is proved. 
It then follows that $f$ is differentiable almost everywhere and that $|f(x)-f(y)| = \int_x^y Df$. By Lemma~\ref{properties_I} (a), every point $x \in \SSS^1$ is a limit point of a nested sequence of intervals of $\II_1$. By Lemma~\ref{lem_unif_quotient}, 
the ratio of the lengths of the nested intervals in $\II_1$ converging to $x$ and the corresponding image intervals in $\II_2$ converging to $f(x)$ converges to $1$. This yields that $Df(x) =1$ at every point $x \in \mathbb{S}^1$ where $f$ is differentiable. 
It thus follows that $Df(x) =1$ for almost every $x \in \mathbb{S}^1$, and integration then yields that $|f(x)-f(y)| = |x-y|$, so $f$ has to be a rotation.

To prove that $f$ is Lipschitz, we show that $f$ does not shrink an interval by a factor exceeding $(4K_0)^{-1}$, where $K_0$ is the uniform constant of Lemma~\ref{lem_unif_quotient}. A similar argument for the inverse $f^{-1}$ shows that $f$ does not expand intervals by the same factor either. 
Take any two points $x, y \in \SSS^1$ sufficiently close, say $\ell = |x-y| < 1/4$, and denote $J$ the (shortest) interval with endpoints $x$ and $y$. By Lemma~\ref{properties_I} (b), we can find a finite covering of $J$ by a collection of intervals $\{ I_{\lambda_k} \}_{k=1}^N$ whose lengths are at most $\ell /10$. By deleting a finite number of intervals of this covering, we may assume this covering is minimal in the sense that deleting any one more interval of the collection would make it fail to be a covering. 

First, by minimality, it is readily shown that the left-endpoints of the intervals are distinct, so we can label the intervals according to the ordering of the left-endpoints $x_k$ of $I_{\lambda_k}$ in $\SSS^1$. Further, by minimality, it is shown that the odd-labeled intervals are mutually disjoint, 
and so with the even-labeled intervals. Define $\Sigma_{\tu{odd}}$ and $\Sigma_{\tu{even}}$ to be the sum of the lengths of the odd- and even-labeled intervals respectively. Either $\Sigma_{\tu{odd}}$ or $\Sigma_{\tu{even}}$ has to be of length at least $\ell/2$. Indeed, if both 
$\Sigma_{\tu{odd}}$ and $\Sigma_{\tu{even}}$ would be strictly less than $\ell/2$, then their union could not cover an interval of length $\ell$, a contradiction. 

Suppose that $\Sigma_{\tu{odd}} \geq \ell/2$. As $J$ contains all odd-labeled intervals, except possibly parts of the first and last, the sum of the lengths of the remaining odd-labeled intervals strictly contained in $J$ is at least $\ell / 2  - 2\ell/10 > \ell/4$.
Similarly, in case $\Sigma_{\tu{even}} \geq \ell/2$, the sum of the lengths of all even-labeled intervals strictly contained in $J$ is larger than $\ell/4$. By Lemma~\ref{lem_unif_quotient}, the lengths of these intervals are not shrunk by a factor more than $K_0$ by $f$, 
and thus $f$ can not shrink $J$ by a factor exceeding $(4K_0)^{-1}$, as required. 
\end{proof}

\subsection{Proof of the main result}

We now finish the proof of Theorem A. By Lemma~\ref{lem_circle_homeo}, $\varphi \colon \D^2 \ra \D^2$ extends to $\mathbb{S}^1 = \partial \D^2$ as a rotation. 
By conjugating $\Gamma_2$ with the corresponding rotation fixing the origin $0 \in \D^2$, we may assume that the induced action on the boundary 
is the identity. In that case we have that $\Lambda_1 = \Lambda_2$. Indeed, let $L_{\lambda_1} \subset \D^2$ be a geodesic with endpoints $x_1 ,x_2 \in \mathbb{S}^1$, with $\lambda_1 \in \Lambda_1$. As $f(x_i) = x_i$, for $i=1,2$, and $L_{\lambda_2} = \varphi(L_{\lambda_1})$, we must have that $L_{\lambda_1} = L_{\lambda_2}$, as a geodesic is uniquely determined by its endpoints. Since $L_{\lambda}$ is the perpendicular bisector of $0$ and $\lambda$ in $\D^2$, it follows that $\lambda_1 = \lambda_2 \in \Lambda_2$. Since this holds for every point in $\Lambda_1$, it follows that $\Lambda_1 = \Lambda_2$. Therefore, we have that $\Lambda:= \Lambda_1 = \Lambda_2$. 

To finish the proof, we need to show this condition implies the surfaces $M_1$ and $M_2$ are commensurable. Consider the stabilizer $\Gamma_{\Lambda} := \{ \gamma \in \Mob(\D^2) ~|~ \gamma( \Lambda ) = \Lambda \}$ of the discrete set $\Lambda$. 
Since $\Gamma_{\Lambda}$ contains $\Gamma_1$ and $\Gamma_2$ as subgroups, $\Gamma_{\Lambda} $ is not elementary. Either $\Gamma_{\Lambda} $ acts properly discontinuously on $\D^2$, or else $\Gamma_{\Lambda}$ contains an elliptic element of infinite order. 
However, since $\Lambda$ is a discrete and $\Gamma_{\Lambda}$-invariant set, the latter is impossible. Therefore, $\Gamma_{\Lambda}$ acts properly discontinuously and is thus Fuchsian. Consequently, since $\Gamma_1$ and $\Gamma_2$ act cocompactly, the index of $\Gamma_1$ 
and $\Gamma_2$ in $\Gamma_{\Lambda}$ is finite. Since the index of $\Gamma_1 \cap \Gamma_2$ in $\Gamma_1$ and $\Gamma_2$ is bounded by the index of $\Gamma_1$ and $\Gamma_2$ in $\Gamma_{\Lambda}$ respectively by standard group theory, it follows that 
$\Gamma_1 \cap \Gamma_2$ has finite index in both $\Gamma_1$ and $\Gamma_2$. That is, $\Gamma_1$ and $\Gamma_2$ are commensurable. This proves Theorem A.

To prove Corollary B, for a generic $\Gamma_1$, the stabilizer $\Gamma_{\Lambda}$ in the above notation equals $\Gamma_1$ by~\cite{green}. Since the orbit of $0$ under $\Gamma_2$ equals that of $\Gamma_1$, if $\Gamma_1 \cap \Gamma_2 \neq \Gamma_1$, 
we can take an element $\mu \in \Gamma_2$ not contained in $\Gamma_1$ and take the group generated by $\Gamma_1$ and $\mu$. Since this group is contained in the stabilizer $\Gamma_{\Lambda}$ and properly extends $\Gamma_1$, we have a contradiction. 
It follows that $\Gamma_1 = \Gamma_2$.

\section{Further Remarks}\label{sec_remarks}

We finish with several remarks regarding the results presented.

\subsection{Focal spectrum of hyperbolic surfaces}
  
Let us first pose a problem regarding the focal spectrum of a closed hyperbolic surface, see also~\cite{KMP} for the case of flat tori. An analogue of the length spectrum is given by the focal spectrum defined in terms of the focal decomposition. Given a basepoint $p \in M = \D^2 \slash \Gamma$ 
in the surface, assume that the lift corresponds to $0 \in \D^2$, denote $\Lambda = \OO_{\Gamma}(0)$ and let $\{ L_{\lambda} \}_{\lambda \in \Lambda}$ be the collection of Brillouin lines relative to the data as given. Now record the radii $r$, with multiplicity, for which the hyperbolic circle 
$C(0, r)$ in $\D^2$ meets either (i) a line $L_{\lambda}$ tangentially or (ii) the intersection of two or more such lines. We pose the problem as to whether this spectrum being equal relative to the basepoints $p \in M_1$ and $q \in M_2$ chosen is equivalent to the condition that there is an 
index-preserving homeomorphism $\varphi_0 \colon T_pM_1 \ra T_qM_2$.

\subsection{Generalizations of focal rigidity}

It would be interesting to determine to what extent the current results extend into the more general setting of variable negatively curved manifolds. Indeed, the covering map on the universal cover is isomorphic to the exponential mapping as before, which gives an explicit construction of the 
focal decomposition in terms of equidistant loci in the cover and associated Brillouin zones. Uniform lattice counting estimates subsequently give bounds on their shape in the cover, which has to be respected by the homeomorphism sending one structure to another. 
The Brillouin zones have a tendency to exhibit behavior that uniformizes the focal decomposition on a large scale and forms the driving force behind rigidity. It is an interesting problem to decide whether focally equivalent closed manifolds of non-positive 
curvature are commensurable, or isometric, modulo rescaling.

\subsection*{Acknowledgements}
The author thanks Mauricio Peixoto for many discussions and his valuable comments on the manuscript, and Caroline Series for several helpful discussions. 
Further, the author thanks the referee for comments and remarks on the manuscript. The author received financial support from CNPq (PCI) at IMPA.


\begin{thebibliography}{HD}

\normalsize

\baselineskip=17pt

\bibitem{bieberbach} L. Bieberbach, {\em \"Uber die Inhaltsgleichheit der Brillouinschen Zonen}, Monatsheft f\"ur Math. und Physik {\bf 48} (1939), 509-515.

\bibitem{buser} P. Buser, {\em Geometry and spectra of compact Riemann surfaces}, Birkh\"auser, 1992.

\bibitem{green} L. Greenberg, {\em Maximal Fuchsian groups}, Bull. Amer. Math. Soc. {\bf 69} (1963), 569-573.

\bibitem{jones} G.A. Jones, {\em Geometric and asymptotic properties of Brillouin zones in lattices}, Bull. London Math. Soc. {\bf 16} (1984), 241-263.

\bibitem{kleiner} B. Kleiner, {\em The asymptotic geometry of negatively curved spaces: uniformization, geometrization and rigidity}, ICM Proceedings, 2006.

\bibitem{KP} I. Kupka \and M.M. Peixoto, {\em On the enumerative geometry of geodesics}, From Topology to Computation-Proceedings of the Smalefest, Springer-Verlag, 1993, 243-253.

\bibitem{pugh} I. Kupka, M.M. Peixoto \and C.C. Pugh, {\em Focal stability of Riemann metrics}, Journ. reine angewandte Math. {\bf 593} (2006), 31-72.

\bibitem{KMP} F.H. Kwakkel, M. Martens and M.M. Peixoto, {\em Focal rigidity of flat tori}, An. Acad. Bras. Cienc. {\bf 83-4} (2011), 1149-1158. 		

\bibitem{mostow} G.D. Mostow, {\em Quasi-conformal mappings in $n$-space and the rigidity of the hyperbolic space forms}, Publ. Math. IHES {\bf 34} (1968), 53-104.

\bibitem{pei2} M.M. Peixoto, {\em Focal decomposition in geometry, arithmetic and physics}, in Geometry, Topology and Physics, Walter de Gruyter, 1997, 213-232.

\bibitem{kupka} M.M. Peixoto, {\em On end-point boundary value problems}, J. Differ. Equations {\bf 44} (1982), 273-280.

\bibitem{pugh_2} M.M. Peixoto \and C.C. Pugh, {\em On focal stability in dimension two}, An. Acad. Bras. Cienc. {\bf 79-1} (2007), 1-11.

\bibitem{veerman} M.M. Peixoto, A.C. Rocha, S. Sutherland \and J.J.P. Veerman, {\em On Brillouin zones}, Comm. Math. Physics {\bf 212-3} (2000), 725-744. 

\bibitem{prasad} G. Prasad, {\em Strong rigidity of Q-rank 1 lattices}, Inv. Math. {\bf 21} (1973),  255-286.

\bibitem{skri} M. Skriganov, {\em Brillouin zones and the geometry of numbers}, J. Math. Sci. {\bf 36-1} (1987), 140-154.

\bibitem{vig} M. Vign\'eras, {\em Vari\'et\'es riemanniennes isospectrales et non isom\'etriques}, Ann. Of Math. {\bf 112-1} (1980), 21-32.

\bibitem{wolp} S. Wolpert, {\em The length spectra as moduli for compact Riemann surfaces}, Ann. Of Math. {\bf 109-2} (1979), 323-351.

\end{thebibliography}
\end{document}